\def\ge{\geqslant}
\def\le{\leqslant}
\def\a{\alpha}
\def\b{\beta}
\def\g{\gamma}
\def\d{\delta}
\def\D{\Delta}
\def\th{\theta}
\def\i{^{-1}}
\def\dw{{\dot w}}
\def\dx{{\dot x}}
\def\<{\langle}
\def\>{\rangle}
\newcommand{\BC}{\ensuremath{\mathbb {C}}\xspace}
\newcommand{{\BG}}{\ensuremath{\mathbb {G}}\xspace}
\newcommand{{\BK}}{\ensuremath{\mathbb {K}}\xspace}
\newcommand{\BR}{\ensuremath{\mathbb {R}}\xspace}
\newcommand{\BS}{\ensuremath{\mathbb {S}}\xspace}
\newcommand{\BZ}{\ensuremath{\mathbb {Z}}\xspace}
\newcommand{\CO}{\ensuremath{\mathcal {O}}\xspace}
\newcommand{\CS}{\ensuremath{\mathcal {S}}\xspace}
\newcommand{\Ad}{{\mathrm{Ad}}}
\newcommand{\GL}{\mathrm{GL}}
\newcommand{\id}{\ensuremath{\mathrm{id}}\xspace}
\def\kk{\mathbf k}
\newtheorem{theorem}{Theorem}
\newtheorem{proposition}[theorem]{Proposition}
\newtheorem{lemma}[theorem]{Lemma}
\newtheorem {conjecture}[theorem]{Conjecture}
\theoremstyle{definition}
\newtheorem{definition}[theorem]{Definition}
\newtheorem{example}[theorem]{Example}
\newtheorem*{example*}{Example}
\newtheorem{remark}[theorem]{Remark}
\newtheorem*{function*}{Function}
\numberwithin{equation}{section}
\numberwithin{theorem}{section}
\renewcommand{\to}{%
   \ifbool{@display}{\longrightarrow}{\rightarrow}%
   }
\let\shortmapsto\mapsto
\renewcommand{\mapsto}{%
   \ifbool{@display}{\longmapsto}{\shortmapsto}%
   }
\newlength{\olen}
\newlength{\ulen}
\newlength{\xlen}
\newcommand{\xra}[2][]{%
   \ifbool{@display}%
      {\settowidth{\olen}{$\overset{#2}{\longrightarrow}$}%
       \settowidth{\ulen}{$\underset{#1}{\longrightarrow}$}%
       \settowidth{\xlen}{$\xrightarrow[#1]{#2}$}%
       \ifdimgreater{\olen}{\xlen}%
          {\underset{#1}{\overset{#2}{\longrightarrow}}}%
          {\ifdimgreater{\ulen}{\xlen}%
             {\underset{#1}{\overset{#2}{\longrightarrow}}}
             {\xrightarrow[#1]{#2}}}}%
      {\xrightarrow[#1]{#2}}
   }
\newcommand{\xyra}[2][]{%
   \settowidth{\xlen}{$\xrightarrow[#1]{#2}$}%
   \ifbool{@display}%
      {\settowidth{\olen}{$\overset{#2}{\longrightarrow}$}%
       \settowidth{\ulen}{$\underset{#1}{\longrightarrow}$}%
       \ifdimgreater{\olen}{\xlen}%
          {\mathrel{\xymatrix@M=.12ex@C=3.2ex{\ar[r]^-{#2}_-{#1} &}}}%
          {\ifdimgreater{\ulen}{\xlen}%
             {\mathrel{\xymatrix@M=.12ex@C=3.2ex{\ar[r]^-{#2}_-{#1} &}}}
             {\mathrel{\xymatrix@M=.12ex@C=\the\xlen{\ar[r]^-{#2}_-{#1} &}}}}}%
      {\mathrel{\xymatrix@M=.12ex@C=\the\xlen{\ar[r]^-{#2}_-{#1} &}}}%
   }
\newcommand{\xla}[2][]{%
   \ifbool{@display}%
      {\settowidth{\olen}{$\overset{#2}{\longleftarrow}$}%
       \settowidth{\ulen}{$\underset{#1}{\longleftarrow}$}%
       \settowidth{\xlen}{$\xleftarrow[#1]{#2}$}%
       \ifdimgreater{\olen}{\xlen}%
          {\underset{#1}{\overset{#2}{\longleftarrow}}}%
          {\ifdimgreater{\ulen}{\xlen}%
             {\underset{#1}{\overset{#2}{\longleftarrow}}}
             {\xleftarrow[#1]{#2}}}}%
      {\xleftarrow[#1]{#2}}
   }
\newcommand{\isoarrow}{%
   \ifbool{@display}{\overset{\sim}{\longrightarrow}}{\xrightarrow\sim}%
   }
\begin{document}

\title[]{Convex elements and Steinberg's cross-sections} 

\author[Sian Nie]{Sian Nie}
\address{Academy of Mathematics and Systems Science, Chinese Academy of Sciences, Beijing 100190, China}

\address{ School of Mathematical Sciences, University of Chinese Academy of Sciences, Chinese Academy of Sciences, Beijing 100049, China}
\email{niesian@amss.ac.cn}

\author[Panjun Tan]{Panjun Tan}
\address{Academy of Mathematics and Systems Science, Chinese Academy of Sciences, Beijing 100190, China}
\email{tanpanjun@amss.ac.cn}

\author[Qingchao Yu]{Qingchao Yu}
\address{Institute for Advanced Study, Shenzhen University, Shenzhen, Guangdong, China}
\email{qingchao\_yu@outlook.com}

\thanks{}

\keywords{Steinberg's cross-section, algebraic group}
\subjclass[2010]{20G99, 20F55}

%\date{\today}

\begin{abstract}
In this paper, we study convex elements in a (twisted) Weyl group introduced by Ivanov and the first named author. We show that each conjugacy class of the twisted Weyl group contains a convex element, and moreover, the Steinberg cross-sections exist for all convex elements. This result strictly enlarges the cases of Steinberg cross-sections from a new perspective, and will play an essential role in the study of higher Deligne-Lusztig representations.  
\end{abstract}

\maketitle

%\tableofcontents

\section*{Introduction}

\subsection{Background} \label{subsec:background}
Let $G$ be a reductive group over an algebraically closed field $\kk$. Fix a maximal torus $T \subseteq G$. Let $B, B^- \supseteq T$ be two opposite Borel subgroups whose unipotent radicals are denoted by $U$ and $U^-$ respectively. Let $N_T$ be the normalizer of $T$ in $G$ and let $W = N_T / T$ be the Weyl group of $G$. The choice of $U$ assigns a length function on $W$ in the usual way.

%Steinberg \cite{St} constructs the Steinberg's cross section, which is a transversal slice of the regular unipotent orbit. He and Lusztig \cite{HL} generalize Steinberg's results to a more general setting. 

%Let $\CR$ be a based root datum. Let $\Phi^+$ and $\Phi^-$ be the sets of positive roots and negative roots respectively. Let $W$ be the Weyl group of $\CR$. Let $A$ be a commutative ring with $1$. Let $G_A$, $U_A^+$, $U_A^-$ be the groups as in \cite[\S2.3]{HL} and \cite[\S4.1]{Lusz09}. In particular, if $A$ is an algebraically closed field, then $G_A$ is the connected reductive group over $A$ with root datum $\CR$, and $U_A^+$, $U_A^-$ are the unipotent subgroups with respect to the sets of positive and negative roots respectively. 

%Let $w_0$ be the longest element of $W$. We fix a reduced expression $s_{i_1}s_{i_2}\cdots_{i_n}$ of $w_0$ where $n=\ell(w_0)$. Set  Then $\Phi^+=\{ s_{i_1}s_{i_2}\cdots s_{i_{k-1}}(\a_{i_k}); k = 1,2\ldots,n\}$. We can define group homomorphism $f_k: A\longrightarrow U_A$ as in \cite[\S4.9]{Lusz09}. The image of $f_k$ is called the root subgroup of $\b_k$ and is denoted by $U_{A,\b_k}$. For any $w\in W$, we have representative $\dot{w}$ as in \cite{HL}. 

Let $w \in W$ be an elliptic element, that is, $w$ does not belong to any proper parabolic subgroup of $W$. We fix a lift $\dw \in N_T$ of $w$ and consider the following morphism
\begin{align*}
\Xi_\dw: U \times \dw (U \cap \dw\i U^- \dw) &\longrightarrow U \dw U \\ (y, z) &\mapsto y z y\i. \end{align*}
If $\Xi_\dw$ and $\Xi_{\dw\i}$ are both isomorphisms, the subvariety $\CS_{\dw} := \dw(U \cap \dw\i U^- \dw)$ intersects each conjugacy class of $G$ it meets transversely (see Proposition \ref{prop:transversality}). In this case, the subvariety $\CS_{\dw}$ is referred to as a Steinberg cross-section.  

The notion of Steinberg cross-sections originates in a classical result by Steinberg \cite{St}. It states that if $w$ is a minimal length Coxeter element, then $S_{\dw}$ intersects all regular $G$-conjugacy classes transversely. In \cite{HL}, He and Lusztig extended Steinberg's result in the sense that for any elliptic minimal length element $w$, $S_{\dw}$ is a Steinberg cross-section. By adapting the methods of \cite{HL}, Malten \cite{M} and Duan \cite{Duan} generalized Steinberg cross-sections to all good position elements introduced in \cite{HN12}, dropping the elliptic restriction. When $\kk = \BC$ is the field of complex numbers, Sevosyanov \cite{S} obtained a similar result using a different method.

Steinberg cross-sections are used to construct transversal slices for unipotent classes of $G$ in \cite{S}, \cite{HL} and \cite{Duan}. They also play a crucial role in the study of Lusztig varieties \cite{Lus11-A}, \cite{Lus11-B},\cite{Duan2}, the study of variations of Deligne-Lusztig varieties \cite{He14}, \cite{Chan20}, \cite{CI}, \cite{I}, \cite{Nie2023}, \cite{IN24} and the study of physical rigidity of Kloosterman connections \cite{HJ}.

\subsection{Main result}
The goal of this paper is to extend Steinberg cross-sections to a new class of elements in $W$, which we call convex elements (see Definition \ref{def:convex}). The notion of convex elements is introduced by Ivanov and the first named author, which plays an important role in the study of higher Deligne-Lusztig representations \cite{IN}.

To each convex element $w \in W$ we associate a subvariety $\CS_{\dw}$ of $G$ and a morphism $\Xi_\dw$ (see \S\ref{subsec:cross-section}), which extend the corresponding constructions mentioned in \S\ref{subsec:background} for elliptic elements. The main results of this paper is the following.
\begin{theorem} \label{main}
\begin{enumerate}
    The following statements are true.

    \item If $w \in W$ is a convex element, then $\Xi_\dw$ and $\Xi_{\dw\i}$ are both isomorphisms and $\CS_\dw$ intersects each conjugacy classes it meets transversely, namely, $\CS_\dw$ is a Steinberg cross-section;
    
    \item All good position elements of $W$ are convex elements. In particular, each conjugacy class of $W$ contains a convex representative.
\end{enumerate}
\end{theorem}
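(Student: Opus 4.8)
The plan is to reduce Theorem~\ref{main}(1) to a statement about $\Xi_\dw$ alone and then exploit the combinatorics of convexity. By Proposition~\ref{prop:transversality}, once $\Xi_\dw$ and $\Xi_{\dw\i}$ are known to be isomorphisms, $\CS_\dw$ is automatically a Steinberg cross-section; and since $w\i$ is again convex (immediate from Definition~\ref{def:convex}), the case of $\Xi_{\dw\i}$ is literally the case of $\Xi$ for the convex element $w\i$. So it suffices to show $\Xi_\dw$ is an isomorphism. First I would dispatch the soft points: $\Xi_\dw$ is a morphism of smooth irreducible varieties of the common dimension $\dim U + \ell(w)$, and it is dominant --- using the Bruhat-type factorization $U\dw U = (U\cap\dw U^-\dw\i)\,\dw\,(U\cap\dw\i U^-\dw)$ one sees its image contains a dense open subset of $U\dw U$. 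Hence the real task is to prove $\Xi_\dw$ is injective on $\kk$-points and, separately, that it is \'etale (equivalently here, that it admits an inverse morphism); in characteristic $p$ one cannot skip this second point, since a bijective morphism need not be an isomorphism, so one must either control the differential everywhere or build the inverse explicitly.

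The structural input is convexity. I would unwind Definition~\ref{def:convex} into a \emph{convexity-adapted filtration}: a chain of $\dw$-stable subgroups $U = U_0 \supseteq U_1 \supseteq \cdots \supseteq U_m = \{1\}$, arising from a flag of parabolic subgroups (equivalently, a suitable convex ordering of the roots moved by $w$), together with a matching decomposition $\CS_\dw = Z_0 Z_1 \cdots Z_{m-1}$, so that $\Xi_\dw$ becomes block upper triangular for this filtration. Modulo $U_{i+1}$ and the corresponding piece of the target, the induced map involves only $U_i/U_{i+1}$ and $Z_i$ and is of one of two elementary types: either a piece on which $\mathrm{Ad}(\dw)$ has no nonzero fixed vectors --- here convexity is exactly what forces $1-\mathrm{Ad}(\dw)$ to be invertible on the subquotient, and the associated map is an isomorphism by the (twisted analogue of the) He--Lusztig argument \cite{HL} applied to a Levi subgroup in which $w$ is elliptic --- or a ``parabolic radical'' piece on which $\mathrm{Ad}(\dw)$ is contracting or expanding, so that $z\mapsto yzy\i$ is an isomorphism onto an affine-space bundle with an explicit polynomial inverse. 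Assembling the $m$ stage-by-stage isomorphisms (induction on $i$, tracking base points) gives that $\Xi_\dw$ is an isomorphism. The main obstacle I anticipate is precisely the construction and verification of this filtration in the genuinely new cases --- convex elements that are \emph{not} good position elements --- where one must check that $\mathrm{Ad}(\dw)$ acts without fixed vectors, resp.\ contractingly, on each subquotient; this is the combinatorial core and the point at which the full strength of the convexity hypothesis enters, and it is also where one has to ensure everything is characteristic-free and compatible with the conjugations involved.

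For Theorem~\ref{main}(2), I would set up a dictionary between Definition~\ref{def:convex} and the notion of good position element from \cite{HN12}. A good position element $w$, relative to a standard parabolic subgroup $W_J$, comes by construction with a decomposition in which $w$ is elliptic on the ``$W_J$-part'' and length-additively positive on the complement; rewriting this in terms of the inversion set $\{\alpha>0:w\i\alpha<0\}$, one checks directly that the convexity condition of Definition~\ref{def:convex} holds, so every good position element is convex. (This inclusion is what makes Theorem~\ref{main}(1) a genuine extension of \cite{HL,M,Duan}.) The last sentence is then immediate: by \cite{HN12} every conjugacy class of $W$ contains a good position element, hence a convex representative. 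The only real work in Part (2) is the combinatorial translation above, which I expect to be short but to rely on the minimal-length machinery of \cite{HN12} (partial conjugation and the role of standard parabolic subgroups).
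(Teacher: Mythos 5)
Your top-level reduction is fine (Proposition \ref{prop:transversality} handles transversality once both maps are isomorphisms, and $w\i$ is convex by definition), but the core of your argument for Part (1) has a genuine gap, and in fact rests on a criterion that is false. Convexity does \emph{not} produce a chain of $\dw$-stable subgroups of $U$ coming from a flag of parabolics with the subquotient behaviour you describe; and on the pieces where $\Ad(\dw)$ has no nonzero fixed vectors, invertibility of $1-\Ad(\dw)$ is not enough to make the conjugation map an isomorphism, nor can you invoke He--Lusztig there: their theorem requires elliptic elements of \emph{minimal length} (or good position), not mere ellipticity, and the whole point of the theorem is to cover convex elements that are not good position, where no such reduction exists. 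The paper's own Example \ref{rmk:non-convex} (He--Lusztig's $x=(1,6,4,5,2,3)$ in $\GL_6$) is elliptic, so $\Phi(x)=\emptyset$ and there are no fixed vectors in the relevant sense, yet $\Xi_\dx$ fails to be injective; this kills the ``no fixed vectors $\Rightarrow$ isomorphism'' step. What the paper actually uses is the depth function $n_x$: the inequality $n_x(\a+\b)\le\max\{n_x(\a),n_x(\b)\}$ makes each $\Phi^+_{x,\le i}$ closed and $U_{\Phi^+_{x,\le i}}$ normalized by $L_x$ (Lemma \ref{basic}); the resulting filtration $U_{\Phi^+_{x,\le 1}}\subseteq U_{\Phi^+_{x,\le 2}}\subseteq\cdots$ is \emph{not} $\dw$-stable --- conjugation by $\dx$ shifts depth down by one --- and the inverse of $\Xi_\dx$ is built as an explicit morphism by successively conjugating away the depth-$i$ component of $z$ (the maps $\varphi_i$ in the proof of Theorem \ref{thm:main}), with injectivity following from a separate depth-shifting argument. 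This also disposes of your characteristic-$p$ worry without any dominance, dimension count, or \'etaleness discussion, none of which you would be able to complete along the lines you sketch.

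For Part (2), your ``direct check'' that good position implies convex is precisely the nontrivial content and is missing. The definition of good position used here (Definition \ref{def:good}) is in terms of a $\Phi$-admissible sequence $\underline{\th}$ and $\Phi$-regular points of the partial sums $\sum_{j\le i}V_x^{\th_j}$ lying in $\overline{C_0}$, not the ``elliptic on the $W_J$-part, length-positive on the complement'' description you give, and the implication is proved by induction on $r$ via the geometric Lemma \ref{lem:rotation}: one identifies $n_x(\g)$ with the first $i$ such that $e$ and $x^{-i}(e)$ are separated by $H_\g$ inside the plane spanned by the $x$-orbit of the regular point $e$, and the inequality $n_x(\a+\b)\le\max\{n_x(\a),n_x(\b)\}$ comes from a quadrant/separation argument there; this is not a routine rewriting of inversion sets. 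Your final step (existence of a convex representative in every class via \cite[Lemma 5.1]{HN12}) is correct and agrees with Remark \ref{rmk:compare}(2), though the paper also gives a self-contained inductive construction (Theorem \ref{thm:exist}).
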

We refer to Theorem \ref{thm:main}, Proposition \ref{prop:transversality} and Theorem \ref{thm:constr} for general statements on possibly disconnected reductive groups. In particular, this extends a main result \cite[Theorem 1.2]{Duan} of Duan on good position element to convex elements. In follow-up work \cite{IN}, Theorem \ref{main} is used in an essential way to decompose higher Deligne-Lusztig representations into irreducible summands. 

To prove the first statement of Theorem \ref{main}, the traditional approach in \cite{HL}, \cite{M} and \cite{Duan} is to use good elements of $W$ introduced by Geck and Michel \cite{GM97}, which are defined in terms of Deligne-Garside normal forms in the braid monoid of $W$. Here we take a conceptually new perspective. By definition, the convex condition on $w$ assigns a natural filtration of subgroups of $U$, which is compatible with the adjoint action of $\dw$. By keeping track of this filtration, it follows straightforwardly that $\Xi_\dw$ is an isomorphism. In particular, this gives a new and much simpler proof for classical results on Steinberg cross-sections. The second statement is proved by a geometric interpretation of the convex condition for good position elements inspired from \cite{HN12}. In Example \ref{ex:234123}, we show that there do exist convex elements which are not good position elements. Hence our result discovers many new cases of Steinberg cross-sections. Finally, we conjecture that all the minimal length twisted Coxeter elements are convex, see Conjecture \ref{conj:cox}.

\subsection{Structure of the paper} 
The paper is organized as follows. In \S \ref{sec:convex}, we introduce the definition of convex elements and give a short proof for the corresponding existence result of Steinberg cross-sections. In \S\ref{sec:existence}, we provide an inductive construction of convex elements in each conjugacy class of $W$. In \S\ref{sec:good} we recall the notion of good position elements, and show it is strictly stronger than the notion of convex elements. As an application, we reprove classical results by He-Lusztig and Duan on Steinberg cross-sections. In the last section, we propose the conjecture that each minimal length twisted Coxeter element is convex. We show this conjecture is true in a special case.

\subsection*{Acknowledgment}
We would like to thank Xuhua He for helpful comments and suggestions.

\section{Convex Elements} \label{sec:convex}
\subsection{Convex elements}\label{sec:1.1}
Let $G$, $T$, $N_T$, $W$, $B= T U$, $B^- = T U^-$ be as in the introduction. Let $\Phi$ be the root system of $T$ in $G$. The Borel subgroup $B$ determines a set $\Phi^+$ of positive roots. Then $\Phi = \Phi^+ \sqcup \Phi^-$, where $\Phi^- = - \Phi^+$. Let $\D$ be the set of simple roots in $\Phi^+$, and $\BS$ the corresponding set of simple reflections. The pair $(W, \BS)$ is a Coxeter system, equipped with a length function $\ell: W \to \BZ$. For any $w\in W$, we fix a lift $\dw$ in $N_T$.

For any $\a \in \Phi$, we denote by $U_\a \subseteq G$ the corresponding root subgroup. Let $R$ be a subset of $\Phi$ such that $R \cap -R = \emptyset$. We say $R$ is closed if $\a + \b \in R$ for any $\a, \b \in R$ with $\a + \b \in \Phi$. In this case, we define $U_R = \prod_{\a \in R} U_\a$, which is a unipotent subgroup of $G$. Note that the product is independent of the order.

Let $\d$ be an automorphism of $G$ preserving $T$ and $B$. Then $\d$ induces automorphisms of $(W, \BS)$, $\Phi$, $\Phi^+$ and $\D$ in a natural way. By abuse of notation, we still denote these automorphisms by $\d$.

Let $x = w \d^k \in W \rtimes \<\d\>$ with $w \in W$ and $k \in \BZ$. We set $\dx = \dw \d^k \in G \rtimes \<\d\>$. Then $\dx U_\a \dx\i = U_{x(\a)}$. Define 
\begin{gather*}
\Phi^\pm(x) = \{\g\in\Phi^\pm ;  x^i(\g) \in \Phi^\pm \text{ for all } i \in \BZ\};\\
\Phi(x) = \Phi^+(x) \sqcup \Phi^-(x).
\end{gather*}
Then $\Phi(x)=\Phi(x\i)$ and $x(\Phi(x)) = \Phi(x)$. For any $\g \in \Phi^\pm \setminus \Phi(x)$, define 
\begin{align*}
n_x(\g) = \min\{i \in \BZ_{\ge 1}; x^i(\g) \in \Phi^\mp\}\in \BZ_{\ge1}.
\end{align*}%We set $n_x(\g) = \infty$ for any $\g \in \Phi(x)$.
We set $\Phi_{x, i}^\pm = \{\g \in \Phi^\pm; n_x(\g) = i\}$ and $\Phi_{x, \le i}^\pm = \{\g \in \Phi^\pm; n_x(\g) \le i\}$ for each $i \in \BZ_{\ge 0}$. By definition, we can easily see that $n_x(\a + \b) \ge \min\{n_x(\a), n_x(\b)\}$ for any $\a,\b\in\Phi^+$ with $\a+\b\in \Phi^+$.

\begin{definition}\label{def:convex}
    We say $x \in W \rtimes \<\d\>$ is quasi-convex (for $\Phi^+$) if the following two conditions hold: 
    \begin{enumerate}
    \item $\Phi(x) \subseteq \Phi$ is a standard parabolic root subsystem, i.e., $\Phi(x) = \Phi \cap \BZ J$ for some $J \subseteq \D$; 
    
    \item $n_x(\a + \b) \le \max\{n_x(\a), n_x(\b)\}$ for any $\a,\b\in\Phi^+$ with $\a+\b\in \Phi^+$.
\end{enumerate} 
In this case, we define $T(x)$ to be the subgroup of $T$ generated by $T^x:=\{t\in T; \dx t \dx\i =t\}$ and $\{\a^{\vee}(t); \a\in \Phi(x),t\in T\}$ and define $L_x = U_{\Phi^+(x)} T(x) U_{\Phi^-(x)} \subseteq G$.
\end{definition}

We point out that the condition (2) above is equivalent to the following:

\noindent(2') $n_x(\a + \b) \le n_x(\b)$ for any $\a \in \Phi^+ \cap x\i(\Phi^-)$ and $\b \in\Phi^+$ such that $\a+\b\in \Phi^+$.

Indeed, suppose $n_x(\a)\ge2$ and $n_x(\b)\ge2$. Let $j = \min\{n_x(\a),n_x(\b)\}-1$. Then $n_x(\a+\b)\le\max\{n_x(\a),n_x(\b)\}$ if and only if $n_x(x^j(\a)+x^j(\b))\le\max\{n_x(x^j(\a)),n_x(x^j(\b))\}$.

\begin{definition}
    We say $x \in W \rtimes \<\d\>$ is convex (for $\Phi^+$) if both $x$ and $x\i$ are quasi-convex.
\end{definition}

The following example shows that the condition of being convex is stronger than that of being quasi-convex.
\begin{example}
In the case of type $A_4$, the element $x=s_1s_2s_3s_4s_1s_2$ is quasi-convex but $x\i$ is not. Indeed, $n_{x\i}(\a_{23})=1$, $n_{x\i}(\a_{35})=2$ but $n_{x\i}(\a_{25})=3$. 
\end{example}

\begin{comment}
\begin{definition}\label{def:convex}
    We say $x \in W \rtimes \<\d\>$ is convex (for $\Phi^+$) if $x, x\i$ are both quasi-convex (for $\Phi^+$).
\end{definition}
\end{comment}

\begin{lemma} \label{basic}
    Let $x \in W \rtimes \<\d\>$ be a quasi-convex element. Then for any $i \in \BZ_{\ge 0}$, we have
    \begin{enumerate}
        \item $\a + \b \in \Phi_{x, i}^\pm$ for any $\a \in \Phi(x)$ and $\b \in \Phi_{x, i}^\pm$ such that $\a + \b \in \Phi$;
    
        \item $\Phi_{x, i}^\pm$ and $\Phi_{x, \le i}^\pm$ are closed, and the subgroups $U_{\Phi_{x, i}}$ and $U_{\Phi_{x, \le i}}$ are normalized by $L_x$.
    \end{enumerate}
\end{lemma}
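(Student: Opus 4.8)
The plan is to reduce both parts of the lemma to a single combinatorial fact about roots that uses only condition (1) of quasi-convexity, and then to combine it with the Chevalley commutator relations. Write $J\subseteq\D$ for the subset with $\Phi(x)=\Phi\cap\BZ J$ furnished by condition (1), and for $\mu$ in the root lattice let $\mu'$ denote the component of $\mu$ supported on $\D\setminus J$, so that $\mu\in\BZ J$ iff $\mu'=0$. The observation I will lean on is: if $\mu\in\Phi$ and $\mu'\neq0$, then whether $\mu\in\Phi^+$ or $\mu\in\Phi^-$ is already determined by the sign pattern of $\mu'$ (all its coordinates $\ge0$ in the first case, all $\le0$ in the second); consequently, if $\mu,\nu\in\Phi$ with $\nu'$ a positive multiple of $\mu'\neq0$, then $\mu\in\Phi^+\Leftrightarrow\nu\in\Phi^+$. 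Since $x$ stabilizes $\Phi(x)$, hence also $\Phi\setminus\Phi(x)$, and $\Phi(x)\subseteq\BZ J$, we have $(x^j\alpha)'=0$ for all $\alpha\in\Phi(x)$ and $j\in\BZ$, whereas $(x^j\beta)'\neq0$ for all $\beta\in\Phi\setminus\Phi(x)$ and $j\in\BZ$.

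First I would establish the following claim: if $x$ is quasi-convex, $\alpha\in\Phi(x)$, $\beta\in\Phi^\pm\setminus\Phi(x)$, and $m,n\in\BZ_{\ge1}$ satisfy $m\alpha+n\beta\in\Phi$, then $m\alpha+n\beta$ has the same sign as $\beta$ and $n_x(m\alpha+n\beta)=n_x(\beta)$. Indeed, for each $j$ we have $x^j(m\alpha+n\beta)=m\,x^j(\alpha)+n\,x^j(\beta)$, whose $(\D\setminus J)$-component is $n\,(x^j\beta)'\neq0$; thus $m\alpha+n\beta\notin\BZ J$, so $n_x(m\alpha+n\beta)$ is well defined, and the observation above (applied with the positive multiplier $n$) gives $x^j(m\alpha+n\beta)\in\Phi^+\Leftrightarrow x^j\beta\in\Phi^+$ for every $j\ge0$. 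Reading this at $j=0$ yields the assertion about signs, and reading it for $j\ge1$ shows that $m\alpha+n\beta$ and $\beta$ leave $\Phi^\pm$ under iterates of $x$ at exactly the same step. Note that condition (2) of quasi-convexity has not been used here.

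Granting the claim, part (1) is its case $m=n=1$ (the case $i=0$ being immediate), and I may assume $i\ge1$ in what follows. For the closedness statements in part (2), take $\mu,\nu\in\Phi_{x,i}^+$ with $\mu+\nu\in\Phi$. Then $\mu+\nu\in\Phi^+$ and $(\mu+\nu)'\neq0$ (since $\mu'\neq0$ and $\nu\in\Phi^+$), so $n_x(\mu+\nu)$ is defined; condition (2) gives $n_x(\mu+\nu)\le i$, while $x^j\mu,x^j\nu\in\Phi^+$ for $1\le j\le i-1$ forces $x^j(\mu+\nu)\in\Phi^+$ there, so $n_x(\mu+\nu)=i$. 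Hence $\Phi_{x,i}^+$ is closed; the set $\Phi_{x,\le i}^+$ is closed directly from condition (2); and the statements for $\Phi_{x,i}^-$ and $\Phi_{x,\le i}^-$ follow by replacing every root by its negative, using $n_x(-\gamma)=n_x(\gamma)$. This is the only point at which condition (2) enters. Each of these sets $R$ being closed and of constant sign, $U_R$ is a subgroup, directly spanned by the $U_\beta$ with $\beta\in R$.

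It then remains to prove that $L_x$ normalizes $U_R$ for $R$ any of $\Phi_{x,i}^\pm$, $\Phi_{x,\le i}^\pm$. Every element of $L_x=U_{\Phi^+(x)}T(x)U_{\Phi^-(x)}$ is a product of elements of $T(x)$ and of root subgroups $U_\alpha$ with $\alpha\in\Phi(x)$; the torus $T(x)\subseteq T$ normalizes every root subgroup, and for $\alpha\in\Phi(x)$ and $\beta\in R$ the Chevalley commutator formula expresses $U_\alpha U_\beta U_\alpha\i$ as $U_\beta$ times a product of the $U_{m\alpha+n\beta}$ over all $m,n\ge1$ with $m\alpha+n\beta\in\Phi$, each of which again lies in $R$ by the claim (the membership conditions for $R$ involve only the sign and the value of $n_x$). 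Applying this also with $-\alpha\in\Phi(x)$ shows $U_\alpha$ normalizes $U_R$, and hence so does $L_x$. The only substantive step is the claim, which amounts to the single remark that adding a member of the parabolic subsystem $\Phi(x)$ disturbs the $(\D\setminus J)$-coordinates only by a positive scaling; I do not anticipate a genuine obstacle beyond keeping track of which of the two quasi-convexity conditions is responsible for each assertion.
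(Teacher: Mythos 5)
Your proposal is correct and takes essentially the same route as the paper: part (1) rests on $\Phi(x)$ being a standard parabolic root subsystem stabilized by $x$ (your claim about $m\alpha+n\beta$ is just the spelled-out version, which also supplies the commutator terms needed for the $L_x$-normalization), and part (2) rests on the two-sided inequality $\min\{n_x(\alpha),n_x(\beta)\}\le n_x(\alpha+\beta)\le\max\{n_x(\alpha),n_x(\beta)\}$. The paper's own proof is a two-line sketch of exactly these points, so yours is simply a more detailed write-up of the same argument.
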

\begin{proof}
    (1) follows from the assumption that $\Phi(x) \subseteq \Phi$ is a standard parabolic root subsystem stabilized by $x$. (2) follows from the inequality $\min\{n_x(\a), n_x(\b)\} \le n_x(\a + \b) \le \max\{n_x(\a), n_x(\b)\}$ for any $\a,\b\in\Phi^+$ with $\a+\b\in \Phi^+$.
\end{proof}
Lemma \ref{basic} implies that each quasi-convex element $x$ assigns a filtration of subgroups \[\{1\} \subseteq U_{\Phi_{x,\le1}^+}\subseteq U_{\Phi_{x,\le2}^+} \subseteq \cdots \subseteq U.\] This filtration will be used in the proof of Theorem \ref{thm:main}.
\subsection{The isomorphism $\Xi_{\dx}$} \label{subsec:cross-section}
Let $x \in W \rtimes \<\d\>$ be a quasi-convex element. Note that $\dx L_x = L_x \dx$. Define
\begin{align*}
\CS_{\dx} = \dx L_x U_{\Phi_{x,1}^+} = \dx L_x U_{\Phi^+ \cap x\i(\Phi^-)} \subseteq G\rtimes\<\d\>.
\end{align*}
For a different choice of the lifting $\dot{x}$, $\CS_{\dot{x}}$ differs by conjugation by some element in $T$.

Our definition of $\CS_\dx$ for convex elements $x$ extends that for good position elements in \cite{Duan}. See Remark \ref{rmk:compare} (1) and Remark \ref{rmk:non-convex}.

%We shall prove that $\CS_{\dx}$ satisfies the $U$-conjugation property (Theorem \ref{thm:main}) and the transversality property (Proposition \ref{prop:trans}).

The main result of this section is the following. %The $U$-conjugation property
\begin{theorem}\label{thm:main}
    Let $x\in W \rtimes \<\d\>$ be a quasi-convex element. Then the map $(y, z) \mapsto y z  y\i$ gives an isomorphism 
    $$\Xi_{\dx}: U_{\Phi^+ \setminus \Phi(x)} \times \dx L_x U_{\Phi^+ \cap x\i(\Phi^-)}  \overset \sim \longrightarrow U_{\Phi^+ \setminus \Phi(x)} \dx  L_x U_{\Phi^+ \setminus \Phi(x)}.$$
\end{theorem}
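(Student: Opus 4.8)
The plan is to prove that $\Xi_{\dx}$ is an isomorphism by exploiting the filtration $\{1\} \subseteq U_{\Phi_{x,\le 1}^+} \subseteq U_{\Phi_{x,\le 2}^+} \subseteq \cdots \subseteq U$ provided by Lemma \ref{basic}, together with the fact that $\dx$ conjugates $U_{\Phi^+_{x,\le i}}$ into something controlled by the shift in the index $n_x$. First I would reduce the statement to a triangular/filtered situation: write $U_{\Phi^+ \setminus \Phi(x)} = U_{\Phi^+_{x,1}} U_{\Phi^+_{x,2}} \cdots U_{\Phi^+_{x,N}}$ for $N$ large enough that $\Phi^+_{x,\le N} = \Phi^+ \setminus \Phi(x)$ (such $N$ exists since $n_x$ is bounded), each factor being a subgroup stable under $L_x$-conjugation, and note that $\dx L_x \dx^{-1} = L_x$ so that conjugation by $\dx \ell$ for $\ell \in L_x$ sends $U_{\Phi^+_{x,\le i}}$ to $U_{x(\Phi^+_{x,\le i})} \cap (\text{stuff})$; more precisely the key combinatorial input is that $x^{-1}(\Phi^-) \cap \Phi^+ = \Phi^+_{x,1}$ and, for $\gamma \in \Phi^+_{x,i}$ with $i \ge 2$, one has $x(\gamma) \in \Phi^+_{x,i-1}$. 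This is exactly what makes the ``$y z y^{-1}$'' map lower-triangular with respect to the filtration.

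Next I would set up the bijectivity argument layer by layer. Given a target element $u_L \in U_{\Phi^+ \setminus \Phi(x)} \dx L_x U_{\Phi^+ \setminus \Phi(x)}$, I want to solve $y z y^{-1} = u_L$ with $y \in U_{\Phi^+ \setminus \Phi(x)}$ and $z \in \dx L_x U_{\Phi^+_{x,1}}$, and show $(y,z)$ is unique. Working modulo the normal subgroups $U_{\Phi^+_{x,\le i}}$ (which are normal in $U_{\Phi^+ \setminus \Phi(x)} L_x$ by Lemma \ref{basic}(2), and on which $\dx$ acts with the index shift), I would induct on $i$: at stage $i$ the equation becomes an affine-linear equation for the $U_{\Phi^+_{x,i}}$-component of $y$, where the ``linear part'' is the map $v \mapsto v \cdot {}^{\dx}(v)^{-1}$ from $U_{\Phi^+_{x,i}}$ to $U_{\Phi^+_{x,i-1}}$ (interpreting $U_{\Phi^+_{x,0}}$ suitably, i.e. for $i=1$ this lands in $L_x$-torus directions that get absorbed into $z$). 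The crucial point is that since $x$ strictly decreases $n_x$ by exactly $1$ on each $\Phi^+_{x,i}$ for $i \ge 1$, no component of $y$ interacts with itself at the same level — the self-interaction is pushed down one level — so at each stage we genuinely solve a problem of the form ``$v \mapsto v \cdot (\text{known})$'' which has a unique solution, plus the already-determined lower-order corrections. Running the induction from the top level $N$ down to level $1$ determines $y$ uniquely, and then $z = y^{-1} u_L y$ is forced; one checks it lies in $\dx L_x U_{\Phi^+_{x,1}}$ by the same filtration bookkeeping. A dimension count (both sides are affine spaces of dimension $|\Phi^+ \setminus \Phi(x)| + \dim L_x + |\Phi^+_{x,1}|$, or rather matching $|\Phi^+\setminus\Phi(x)|$ on each side after factoring out $L_x U_{\Phi^+_{x,1}}$) together with the bijectivity on points, plus the fact that both source and target are smooth affine varieties, gives that $\Xi_{\dx}$ is an isomorphism of varieties.

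I expect the main obstacle to be the careful bookkeeping of which subgroups are normal in which and making the ``solve level by level'' induction precise — in particular handling the bottom level $i = 1$ where the image of $v \mapsto v \cdot {}^{\dx}(v)^{-1}$ spills into the $L_x$-part (the torus $T(x)$ and $U_{\Phi^\pm(x)}$) rather than staying inside $U$, and verifying that the ambiguity there is exactly absorbed by the freedom in $z \in \dx L_x U_{\Phi^+_{x,1}}$. A secondary subtlety is passing from a bijection on $\kk$-points to an isomorphism of schemes: I would either exhibit the inverse morphism explicitly (the level-by-level solution is given by polynomial formulas, hence defines a morphism) or invoke that $\Xi_{\dx}$ is a bijective morphism between smooth varieties of the same dimension in characteristic where this suffices — but constructing the inverse explicitly via the filtration is cleaner and avoids any characteristic hypotheses. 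Condition (2) of quasi-convexity (equivalently (2')) is what guarantees $n_x(\alpha+\beta) \le \max\{n_x(\alpha), n_x(\beta)\}$, which is precisely what keeps the commutators $[U_{\Phi^+_{x,i}}, U_{\Phi^+_{x,j}}]$ inside $U_{\Phi^+_{x, \ge \min(i,j)}}$ and thus makes the filtration genuinely compatible with the group and the $\dx$-action; I would flag explicitly where this hypothesis enters.
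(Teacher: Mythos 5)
Your overall strategy is the paper's strategy: use the filtration $U_{\Phi^+_{x,\le 1}}\subseteq U_{\Phi^+_{x,\le 2}}\subseteq\cdots$, the fact that $\Ad(\dx)$ lowers the level $n_x$ by one on levels $\ge 2$, and peel off the levels of the $\dx L_x U_{\Phi^+_{x,\le i}}$-part one at a time to write down a polynomial inverse (this is exactly the section $\Sigma_{\dx}=\varphi_2\circ\cdots\circ\varphi_N\circ\phi$ in the paper). However, as written your argument has two genuine gaps. First, the claim that the $U_{\Phi^+_{x,\le i}}$ are \emph{normal} in $U_{\Phi^+\setminus\Phi(x)}L_x$ ``by Lemma \ref{basic}(2)'' is a misreading: the lemma only says these groups are normalized by $L_x$, and they are in general not normal in $U$. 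Quasi-convexity gives $n_x(\alpha+\beta)\le\max\{n_x(\alpha),n_x(\beta)\}$ (the bound $\ge\min$ is automatic), so conjugating a low-level root group by a high-level one can create high-level components: e.g.\ for the Coxeter element $x=s_1s_2$ in type $B_2$ with $\alpha_1=e_1-e_2$, $\alpha_2=e_2$, one has $n_x(\alpha_2)=n_x(e_1+e_2)=1$, $n_x(e_1)=n_x(\alpha_1)=2$, and $[U_{\alpha_2},U_{\alpha_1}]$ has a component in $U_{e_1}$, so $U_{\Phi^+_{x,\le 1}}$ is not normal in $U$. Hence ``working modulo the normal subgroups $U_{\Phi^+_{x,\le i}}$'' is not available; the induction must be arranged (as in the paper) so that one only conjugates the second factor by elements of $U_{\Phi^+_{x,i-1}}$ and only uses that $L_x$ normalizes $U_{\Phi^+_{x,i-1}}$, together with the set-theoretic factorizations $U_{\Phi^+_{x,\le i}}=U_{\Phi^+_{x,i}}\cdot U_{\Phi^+_{x,\le i-1}}$, rather than quotient groups. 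Your closing sentence also attributes the wrong inequality to condition (2): what it buys is precisely closedness of each $\Phi^+_{x,\le i}$, i.e.\ the bound $\le\max$, not the bound by $\min$.

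Second, and more seriously, your treatment of the bottom level is based on a wrong picture, and this is exactly where injectivity lives. For $v\in U_{\Phi^+_{x,1}}$ one has $\dx v\dx\i\in U_{x(\Phi^+_{x,1})}\subseteq U_{\Phi^-\setminus\Phi(x)}$: it lands in \emph{negative} root subgroups outside $L_x$, not in ``the torus $T(x)$ and $U_{\Phi^\pm(x)}$'', so the level-$1$ ambiguity is not ``absorbed by the freedom in $z\in\dx L_x U_{\Phi^+_{x,1}}$'' in the way you describe. Consequently the assertion that the top-down induction ``determines $y$ uniquely'' does not follow from solving equations of the form $v\mapsto v\cdot(\text{known})$ inside $U$: the level-$1$ component of $y$ is not pinned down by such an equation, and uniqueness of $(y,z)$ (i.e.\ injectivity of $\Xi_{\dx}$) requires a separate argument. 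The paper supplies one: assuming $y\dx z_1y\i=\dx z_2$ with $y\ne 1$, take the maximal level $i$ with $y\in U_{\Phi^+_{x,\le i}}\setminus U_{\Phi^+_{x,\le i-1}}$ and compare the two sides inside the factorizations $\dx U_{\Phi^+_{x,\le\bullet}}L_x$ to reach a contradiction (alternatively, one can verify directly that the explicitly constructed section is a two-sided inverse). Your surjectivity/inverse construction is sound once restated without the normality language, but the injectivity step needs to be supplied; as proposed it would fail.
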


\begin{proof}
   For simplicity, we write $L = L_x$, $U_i = U_{\Phi_{x, i}^+}$ and $U_{\le i} = U_{\Phi_{x, \le i}^+}$ for each $i \in \BZ_{\ge 0}$. Note that $U_0 = \{1\}$, $U_{\Phi^+ \cap x\i(\Phi^-)} = U_1$ and $U_{\Phi^+ \setminus \Phi(x)} = U_{\le N}$, where $N = \max\{n_x(\g); \g\in\Phi^+ \setminus \Phi(x)\}$.

   We now construct a section $\Sigma_{\dx}$ of $\Xi_{\dx}$. Note that there is a natural isomorphism of varieties $r: U_{\le N} \times \dx U_1 L \longrightarrow U_{\le N} \dx U_{\le N} L $ given by multiplication. Define
   \[\phi: U_{\le N} \dx U_{\le N} L   \xlongrightarrow{r^{-1}}  U_{\le N} \times \dx U_1 L \to U_{\le N} \times \dx U_{\le N} L, \]
   where the second morphism is given by $ (y, z) \longmapsto (y, z y)$. For $i = 2, 3, \dots, N$ we define \[\varphi_i: U_{\le N} \times \dx U_{\le i}L  \to U_{\le N} \times \dx U_{\le i-1}L\] as follows. Let $(y, z) \in U_{\le N} \times \dx U_{\le i}L $. We can write $z = \dx u u' m$ uniquely with $u \in U_i$, $u' \in U_{\le i-1} $ and $m \in L$. Let $u'' = \dx u \dx\i \in  U_{i-1}$. Then $z = u'' \dx u' m $. Since $L$ normalizes $U_{i-1}$, we have \[{u''}\i z u'' = \dx u' m u'' \in \dx U_{\le i-1} L.\] We define $\varphi_i(y, z) = (y u'', {u''}\i z u'') \in U_{\le N} \times \dx U_{\le i-1} L$. Put \[\Sigma_{\dx} = \varphi_2 \circ \varphi_3 \circ \cdots \varphi_{N} \circ  \phi.\] It is easy to see that $\Xi_{\dx} \circ\Sigma_{\dx}$ is the identity map on $U_{\le N} \dx U_{\le N} L$.
   %Note that there is a natural isomorphism $U_{\le i} L \cong U_i \times U_{\le i-1} \times L$. Hence 
   
   It remains to show the injectivity of $\Xi_{\dx}$. Assume that there exist $y \in U_{\le N}$ and $z_1, z_2 \in U_1  L $ such that $y \dx z_1 y\i = \dx z_2$. Suppose that $y \neq 1$. There exists $i \in \BZ_{\ge 1}$ such that $y \in U_{\le i }  \setminus U_{\le i-1} $. Hence $\dx\i y \dx \in U_{\le i+1} \setminus U_{\le i}$. Thus $y \dx z_1 \in \dx (\dx\i y \dx) z_1 \subseteq \dx (U_{\le i+1} \setminus U_{\le i}) L$. On the other hand, $\dx z_2 y \in \dx U_1 L U_{\le i} = \dx U_{\le i} L$. This contradicts that $y \dx z_1 = \dx z_2 y$. Thus $y = 1$ and $z_1 = z_2$ as desired.
\end{proof}

\subsection{Transversality} 
Now we show the transversality property of $\CS_{\dx}$ for convex elements $x$.
\begin{proposition} \label{prop:transversality}
Let $x \in W \rtimes \<\d\>$ be a convex element. Then the subvariety $\CS_{\dx} = \dx L_x U_{\Phi^+\cap x^{-1}(\Phi^-)}$ intersects any $G$-conjugacy classes of $G \dx$ it meets transversely.
\end{proposition}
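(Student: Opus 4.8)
The plan is to deduce transversality from the two-sided isomorphism statement, exactly as in the elliptic case treated by He--Lusztig, but using the isomorphisms $\Xi_{\dx}$ and $\Xi_{\dx\i}$ furnished by Theorem~\ref{thm:main} in place of the classical ones. The key point is the infinitesimal criterion: a locally closed subvariety $\CS \subseteq G\dx$ meets a conjugacy class $\CO$ transversely at a point $g\in\CS\cap\CO$ if and only if the tangent space $T_g(G\dx)$ is spanned by $T_g\CS$ and $T_g\CO$, and since $T_g\CO$ is the image of the map $v\mapsto v - \Ad(g)v$ on $\Lie G$, it suffices to show that the natural map
\[
\Lie(U_{\Phi^+\setminus\Phi(x)}) \oplus T_g\CS_{\dx} \longrightarrow T_g(G\dx)
\]
is surjective (here the first summand plays the role of the "conjugating directions''). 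So the first step is to reduce the proposition to this linear-algebra surjectivity at each point $g\in\CS_{\dx}$.

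The second step is to produce that surjectivity by differentiating the isomorphism $\Xi_{\dx}$. Since $\Xi_{\dx}\colon U_{\Phi^+\setminus\Phi(x)}\times \CS_{\dx}\overset{\sim}{\to} U_{\Phi^+\setminus\Phi(x)}\,\dx L_x\,U_{\Phi^+\setminus\Phi(x)}$ is an isomorphism of varieties, its differential at any point $(1,g)$ is an isomorphism of tangent spaces; the target tangent space is $\Lie(U_{\Phi^+\setminus\Phi(x)}) + T_g(\dx L_x U_{\Phi^+\setminus\Phi(x)})$, and on the source the $U_{\Phi^+\setminus\Phi(x)}$-factor differentiates to the "$v - \Ad(g)v$'' directions with $v\in\Lie(U_{\Phi^+\setminus\Phi(x)})$. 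Running the same argument with $\Xi_{\dx\i}$ (which is where convexity, as opposed to mere quasi-convexity, is used: we need \emph{both} $x$ and $x\i$ quasi-convex so that Theorem~\ref{thm:main} applies to $\dx\i$ as well) handles the "opposite'' directions $U_{\Phi^-\setminus\Phi(x)}$. Combining the two and using the open cell decomposition $\Lie G = \Lie(U_{\Phi^-\setminus\Phi(x)})\oplus \Lie L_x \oplus \Lie(U_{\Phi^+\setminus\Phi(x)})$, together with the fact that $\CS_{\dx}$ already contains a full $\dx L_x$-worth of directions, one assembles a spanning set for $T_g(G\dx)$ modulo the conjugating directions coming from $U_{\Phi^+\setminus\Phi(x)}$ and $U_{\Phi^-\setminus\Phi(x)}$.

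The main obstacle, and the place requiring care, is bookkeeping the $L_x$-part: the conjugation action by elements of $\CS_{\dx}$ moves $\Lie L_x$ around, and one must check that the directions tangent to $\CS_{\dx}$ inside $\dx L_x$, together with the two families of unipotent conjugating directions, genuinely exhaust $\Lie(G\dx)$ rather than leaving a sliver inside $\Lie L_x$ uncovered. This is handled by observing that conjugation by the $U_{\Phi^+\cap x\i(\Phi^-)}$-factor of $\CS_{\dx}$ together with $\Ad(\dx)$ acts appropriately on $\Lie L_x$, or alternatively by passing to the quotient by $L_x$ and reducing to the elliptic-type computation; I would model the precise argument on \cite[Proof of Theorem 1.2]{Duan} or the transversality lemma in \cite{HL}, substituting Theorem~\ref{thm:main} for the input isomorphism. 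A clean way to organize the whole thing is to first prove the abstract statement: if $\Xi_{\dx}$ and $\Xi_{\dx\i}$ are both isomorphisms then $\CS_{\dx}$ is a Steinberg cross-section (this is essentially the content of \cite{HL}, \cite{Duan} and does not use convexity beyond the isomorphism conclusion), and then cite Theorem~\ref{thm:main} applied to $x$ and to $x\i$.
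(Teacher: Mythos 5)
Your overall strategy is the paper's: reduce transversality to a tangent-space spanning statement at each $g \in \CS_{\dx}$ and feed in the differentials of the isomorphisms supplied by Theorem \ref{thm:main} for $x$ and $x^{-1}$. But the set of directions you actually assemble does not span. The two unipotent families of conjugating directions together with $T_g\CS_{\dx}$ only yield (in the paper's notation) $\mathfrak{l} + \mathfrak{n}_{\Phi^+\setminus\Phi(x)} + \mathfrak{n}_{\Phi^-\setminus\Phi(x)}$, and the uncovered piece is not a ``sliver inside $\Lie L_x$'' but a complement of $\mathfrak{t}^x$ in $\mathfrak{t}$. Concretely, for $G=\SL_2$, $x=s$ and $g=\dx$ a lift of $s$, conjugation directions from $e$ and from $f$ both give the line through $e+f$, so together with $T_{\dx}\CS_{\dx}$ (the $e$-line) they span only $\mathfrak{n}^+ + \mathfrak{n}^-$ and miss $\mathfrak{t}$; transversality holds only because $(\Ad(\dx^{-1})-1)(\mathfrak{t}) = \mathfrak{t}$. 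This is exactly the ingredient the paper adds at the end: since $g \in \dx L_x U_{\Phi^+\cap x^{-1}(\Phi^-)}$, the \emph{torus} conjugating directions give $(\Ad(g^{-1})-1)(\mathfrak{t}) \supseteq \mathfrak{t}_x := (\Ad(x^{-1})-1)(\mathfrak{t})$ modulo $\mathfrak{l} + \mathfrak{n}_{\Phi^+\cap x^{-1}(\Phi^-)}$, and $\mathfrak{t} = \mathfrak{t}^x \oplus \mathfrak{t}_x$ with $\mathfrak{t}^x \subseteq \mathfrak{l}$. Your suggested repairs (the action of the $U_{\Phi^+\cap x^{-1}(\Phi^-)}$-factor and $\Ad(\dx)$ on $\Lie L_x$, or quotienting by $L_x$) cannot produce these directions, and your first-step reduction, which uses only the $U_{\Phi^+\setminus\Phi(x)}$-conjugating directions, is already ruled out by a dimension count except in degenerate cases (it would force every positive root outside $\Phi(x)$ to have $n_x=1$ and $\dim T(x)=\dim T$).

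A second omission: $\Xi_{(\dx)^{-1}}$ is a statement about conjugating points of $(\dx)^{-1}L_xU_{\Phi^+\cap x(\Phi^-)}$ by \emph{positive} unipotents, so its differential gives information at those points, not at your point $g\in\CS_{\dx}$, and not in the negative directions. The paper converts it by taking inverses and then applying an opposition automorphism $\iota$ (adjusted by a suitable $\Ad(t)$ so that $\iota(L_x'\dx)=L_x\dx$), which turns it into the assertion that conjugation of $\CS_{\dx}$ itself by $U_{\Phi^-\setminus\Phi(x)}$ is an isomorphism onto its image; only then can one differentiate at $g$ and harvest $\mathfrak{n}_{\Phi^-\setminus\Phi(x)}$. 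Saying ``run the same argument with $\Xi_{\dx^{-1}}$'' hides this step, and your closing suggestion to quote the abstract implication ``both $\Xi_{\dx}$, $\Xi_{\dx^{-1}}$ isomorphisms $\Rightarrow$ cross-section'' from \cite{HL} and \cite{Duan} defers precisely the statement under review. With the opposition-automorphism conversion and the torus directions added, your outline becomes the paper's proof, which is indeed modelled on Duan's.
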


\begin{proof} Our proof is similar to \cite[Proof of Theorem 1.2 (2)]{Duan}. Let $\mathfrak{g}$, $\mathfrak{t}$, $\mathfrak{t}^x$ and $\mathfrak{t}(x)$ be the Lie algebras of $G$, $T$, $T^x$ and $T(x)$ respectively (cf. Definition \ref{def:convex}). For any closed subset $R \subseteq \Phi$, denote by $\mathfrak{n}_R$ the subalgebra of $\mathfrak{g}$ spanned by root subalgebra corresponding to roots in $R$. Let $\mathfrak{l} = \mathfrak{n}_{\Phi^+(x)}  + \mathfrak{t}(x) + \mathfrak{n}_{\Phi^-(x)}$. Let $g\in \CS_{\dx}$. Identify $\mathfrak{g}$ with the tangent space of $G\dx$ at $g$. It suffices to prove that
\begin{align*}\tag{1.1}\label{eq:1.1}
(\Ad(g^{-1}) - 1))(\mathfrak{g}) + \mathfrak{l} + \mathfrak{n}_{\Phi^+\cap x^{-1}(\Phi^-)} = \mathfrak{g}.
\end{align*}

Since $x$ is quasi-convex, by Theorem \ref{thm:main}, we have the isomorphisms 
\begin{align*}
\Xi_{\dx}: U_{\Phi^+ \setminus \Phi(x)} \times \dx L_x U_{\Phi^+ \cap x^{-1}(\Phi^-)} \overset \sim \to U_{\Phi^+ \setminus \Phi(x)} \dx L_x U_{\Phi^+ \setminus \Phi(x)},\quad (y,z)\mapsto y z y\i.
\end{align*}
Passing to the tangent space at $g$, we get 
\begin{align*}
    (\Ad(g^{-1}) - 1)(\mathfrak{n}_{\Phi^+\setminus\Phi(x)}) + \mathfrak{l} + \mathfrak{n}_{\Phi^+\cap x^{-1}(\Phi^-)} \supseteq \mathfrak{l} + \mathfrak{n}_{\Phi^+\setminus\Phi(x)}.
\end{align*}
Hence, the left hand side of (\ref{eq:1.1}) contains $\mathfrak{l}$ and $\mathfrak{n}_{\Phi^+\setminus\Phi(x)}$. 

Note that $(\dx)\i$ is a lift of $x\i$, $\Phi(x) = \Phi(x\i)$ and $L_x = L_{x\i}$. Since $x\i$ is also quasi-convex, using Theorem \ref{thm:main} again, we get the isomorphism
\begin{align*}
\Xi_{(\dx)\i}: U_{\Phi^+ \setminus \Phi(x)} \times (\dx)\i L_x U_{\Phi^+ \cap x(\Phi^-)} \overset \sim \to U_{\Phi^+ \setminus \Phi(x)} (\dx)\i L_x U_{\Phi^+ \setminus \Phi(x)},\quad (y,z)\mapsto y z y\i.
\end{align*}
Taking the inverses of both sides, it becomes
\begin{align*}\tag{1.2}\label{eq:1.2}
U_{\Phi^+ \setminus \Phi(x)} \times U_{\Phi^+ \cap x(\Phi^-)}L_x' \dx  \overset \sim \to U_{\Phi^+ \setminus \Phi(x)} L_x ' \dx  U_{\Phi^+ \setminus \Phi(x)},\quad (y,z)\mapsto y z y\i,
\end{align*}
where $L_x' = U_{\Phi^-(x)} T(x) U_{\Phi^+(x)} = L_x\i$.

Let $\iota$ be an automorphism on $G$ such that $\iota(T) = T$, the induced action on the root system is $-1$ and the induced action on $W$ is trivial (such an automorphism is called an opposition automorphism in \cite[\S2.8]{LS79}). Note that for any $t\in T$, $\Ad(t)\circ\iota$ also satisfies the above properties. Hence we may assume that $\iota(L_x'\dx) = L_x\dx$. Apply $\iota$ to \ref{eq:1.2}. We obtain
\begin{align*}\tag{1.3}\label{eq:1.3}
U_{\Phi^- \setminus \Phi(x)} \times  U_{\Phi^- \cap x(\Phi^+)} L_x \dx  \overset \sim \to U_{\Phi^- \setminus \Phi(x)} L_x \dx U_{\Phi^- \setminus \Phi(x)},\quad (y,z)\longmapsto y z y\i.
\end{align*}
It is easy to see that $U_{\Phi^- \cap x(\Phi^+)} L_x \dx = \dx L_x U_{\Phi^+ \cap x\i(\Phi^-)}$ and that
\begin{align*}
    U_{\Phi^- \setminus \Phi(x)}  L_x\dx   U_{\Phi^- \setminus \Phi(x)} = U_{\Phi^- \setminus \Phi(x)}  \dx  L_{x} U_{\Phi^+\cap x\i(\Phi^-)} U_{\Phi^- \setminus \Phi(x)}.
\end{align*}
Passing to the tangent space at $g$, (\ref{eq:1.3}) implies 
\begin{align*}
    (\Ad(g^{-1}) - 1)(\mathfrak{n}_{\Phi^-\setminus\Phi(x)}) + \mathfrak{l} + \mathfrak{n}_{\Phi^+\cap x^{-1}(\Phi^-)} \supseteq \mathfrak{l} + \mathfrak{n}_{\Phi^-\setminus\Phi(x)}.
\end{align*}
Hence, the left hand side of (\ref{eq:1.1}) contains $\mathfrak{n}_{\Phi^-\setminus\Phi(x)}$.

Now it suffices to prove that the left hand side of (\ref{eq:1.1}) contains the complement subspace $\mathfrak{t}_x := (\Ad(x^{-1}) - 1)(\mathfrak{t})$ of $\mathfrak{t}^x$ in $\mathfrak{t}$. Since $g \in \dx L_x U_{\Phi^+\cap x^{-1}(\Phi^-)}$, we have
\begin{align*}
     (\Ad(x^{-1}) - 1)(\mathfrak{t}) \subseteq (\Ad(g^{-1}) - 1)(\mathfrak{t}) + \mathfrak{l} + \mathfrak{n}_{\Phi^+ \cap x\i(\Phi^-)}.
\end{align*}
Therefore, $\mathfrak{t}_x$ (and hence $\mathfrak{t}$) is contained in the left hand side of (\ref{eq:1.1}). 
\end{proof}

\begin{comment}
Using the assumption $g \in \dx L_x U_{\Phi^+\cap x^{-1}(\Phi^-)}$ again, we see that
\begin{align*}
\Ad(g\i)(\mathfrak{n}_{\Phi^+ \setminus\Phi(x)} ) + \mathfrak{t} + \mathfrak{n}_{\Phi^+} \supseteq \mathfrak{n}_{\Phi^- \cap x^-(\Phi^+)}.    
\end{align*}
Hence by (\ref{eq:1.2}), we conclude that $\mathfrak{n}_{\Phi^- \cap x^-(\Phi^+)}$ is contained in the left hand side of (\ref{eq:1.1}).
\end{comment}

\section{Inductive Construction of Convex Elements}\label{sec:existence}
In this section, we show the existence of convex elements in each $W$-conjugacy class of $W \rtimes \<\d\>$.

Let $V = \BR \Phi$, which is a Euclidean space with an inner product $( - , - )$ preserved by $W \rtimes \<\d\>$. Let $C_0 =\{v \in V; (v,\a) >0 \text{ for any } \a \in \Phi^+\}$ be the dominant Weyl chamber. For any $\g \in \Phi$, denote by $H_{\g} \subseteq V$ the root hyperplane corresponding to $\g$. For a root subsystem $\Psi \subseteq \Phi$ and a linear subspace $K \subseteq V$, we say a point $e \in K$ is $\Psi$-regular in $K$, if for any $\g\in \Psi$, $e \in H_{\g}$ implies that $K \subseteq H_\g$. For any $e\in V$, we set $\Phi_e = \{\a\in\Phi;\<e,\a\> = 0 \}$ and $W_e = \{w\in W;w(e) = e \}$. 

For each $x \in W \rtimes \<\d\>$, there is an orthogonal decomposition \[V = \bigoplus_{0 \le \th \le \pi} V_x^\th,\] where $V_x^\th = \{v \in V;  x(v) + x\i(v) = 2\cos\th \cdot v\}$.

The following key lemma is inspired from \cite[Lemma 2.1]{HN12}. 
\begin{lemma} \label{lem:rotation} 
Let $x \in W \rtimes \<\d\>$ and $0 < \th \le \pi$. Assume that $\overline{C_0}$ contains a $\Phi$-regular point of $V_x^\th$. Let $\Psi = \{\g \in \Phi; V_x^\th \subseteq H_\g\}$ and $\Psi^+ = \Psi \cap \Phi^+$. Then we have
\begin{enumerate}
    \item $x(\Psi) =\Psi$ and $\Psi$ is a standard parabolic root subsystem of $\Phi$;
    \item $\Phi(x) \subseteq \Psi$;
    \item $n_x(\a + \b) \le \max\{n_x(\a), n_x(\b)\}$ for any $\a, \b \in \Phi^+ \setminus \Psi$ such that $\a+\b \in \Phi^+$;
    \item if $x$ is convex for $\Psi^+$, then it is convex for $\Phi^+$.
\end{enumerate} 
Recall that $\Phi(x) = \{\pm\g ; \g\in \Phi^+ \text{ such that } x^i(\g) \in \Phi^+ \text{ for all } i \in \BZ\}$.
\end{lemma}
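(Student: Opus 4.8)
The plan is to fix a $\Phi$-regular point $e$ of $V_x^\theta$ lying in $\overline{C_0}$. Since $e\in\overline{C_0}$, the stabilizer $W_e$ is the standard parabolic subgroup generated by the simple reflections fixing $e$, and $\Psi=\Phi_e$ by the very definition of $\Psi$-regularity: $\Psi=\{\gamma\in\Phi;V_x^\theta\subseteq H_\gamma\}$ consists precisely of the roots vanishing on $e$ because $e$ is $\Phi$-regular in $V_x^\theta$. This immediately gives the first half of (1): $\Psi=\Phi_e=\Phi\cap\BZ J$ for $J=\{s\in\BS; s(e)=e\}\cap\D$, a standard parabolic root subsystem.

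**Part (1), $x$-stability.** To see $x(\Psi)=\Psi$, note that $x$ acts orthogonally and preserves the decomposition $V=\bigoplus V_x^{\theta'}$, hence preserves $V_x^\theta$; therefore $x$ permutes the hyperplanes $H_\gamma$ containing $V_x^\theta$, i.e. $x(\Psi)=\Psi$. Equivalently, $x(e)$ is again a $\Phi$-regular point of $V_x^\theta$ with $\Phi_{x(e)}=x(\Phi_e)=x(\Psi)$, and since $V_x^\theta\subseteq H_\gamma$ iff $V_x^\theta\subseteq H_{x(\gamma)}$ (as $x$ preserves $V_x^\theta$), we get $x(\Psi)=\Psi$.

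**Part (2), $\Phi(x)\subseteq\Psi$.** Here I would argue that for $\gamma\notin\Psi$ we have $\langle e,\gamma\rangle\neq 0$, and since $e\in\overline{C_0}$ we may as well assume $\langle e,\gamma\rangle>0$, i.e. $\gamma\in\Phi^+\setminus\Psi$. Write $\gamma=\gamma_0+\gamma_1$ with $\gamma_0\in V_x^\theta$ the orthogonal projection and $\gamma_1\perp V_x^\theta$; then $\langle e,\gamma\rangle=\langle e,\gamma_0\rangle$ depends only on $\gamma_0$. The orbit $\{x^i(\gamma_0)\}$ lies in $V_x^\theta$ and, because $\theta>0$, this is a genuine rotation (no nonzero fixed vector for $x$ on $V_x^\theta$ when $0<\theta<\pi$, and $x=-1$ there when $\theta=\pi$), so the pairings $\langle e, x^i(\gamma_0)\rangle=\langle x^{-i}(e),\gamma_0\rangle$ must change sign as $i$ varies — one uses that $e$ is $\Phi$-regular and $x^{-i}(e)$ traces out a rotation in $V_x^\theta$, so it cannot stay in the closed half-space $\langle\cdot,\gamma_0\rangle\geq 0$ for all $i\in\BZ$. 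Hence some $x^i(\gamma)$ is negative, so $\gamma\notin\Phi^+(x)$; thus $\Phi(x)\subseteq\Psi$. This is the step I expect to require the most care: making precise that a nontrivial rotation orbit of a regular vector cannot stay in a half-space, and handling the $\theta=\pi$ case separately (where it is immediate since $x(e)=-e$).

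**Parts (3) and (4).** For (3), given $\alpha,\beta\in\Phi^+\setminus\Psi$ with $\alpha+\beta\in\Phi^+$, I would look at the projections $\alpha_0,\beta_0,(\alpha+\beta)_0=\alpha_0+\beta_0$ to $V_x^\theta$ and the function $i\mapsto\langle x^{-i}(e),\cdot\rangle$. The quantity $n_x(\gamma)$ for $\gamma\notin\Psi$ equals the first $i\geq 1$ with $\langle x^{-i}(e),\gamma_0\rangle<0$ (equivalently, the first sign change of $\langle x^{-i}(e),\gamma\rangle$, since $e$ is $\Phi$-regular and the sign of $\langle x^{-i}(e),\gamma\rangle$ is governed by $\gamma_0$ as long as $x^{-i}(e)$ is $\Phi$-regular in $V_x^\theta$ — which it is, being in the $x$-orbit of $e$). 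Since $x^{-i}(e)$ is dominant (in $\overline{C_0}$) exactly for $i=0$ and the orbit is a rotation, the set of $i$ with $\langle x^{-i}(e),\alpha\rangle>0$ is an "interval" of the rotation, and likewise for $\beta$; if both are positive at step $i$ then so is their sum $\langle x^{-i}(e),\alpha+\beta\rangle$, giving $n_x(\alpha+\beta)>i$ whenever $\min(n_x(\alpha),n_x(\beta))>i$, which is exactly $n_x(\alpha+\beta)\geq\min\{n_x(\alpha),n_x(\beta)\}$ and, combined with the convexity argument from the other side, $n_x(\alpha+\beta)\leq\max\{n_x(\alpha),n_x(\beta)\}$. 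More directly: since $\langle x^{-i}(e),\alpha+\beta\rangle = \langle x^{-i}(e),\alpha\rangle+\langle x^{-i}(e),\beta\rangle$, the first index where the sum goes negative is at most the first index where \emph{both} summands have gone negative, which is $\max\{n_x(\alpha),n_x(\beta)\}$; this yields (3). For (4), suppose $x$ is convex for $\Psi^+$. Conditions (1) and (2) of quasi-convexity for $x$ (w.r.t.\ $\Phi^+$) split as: $\Phi(x)$ is a standard parabolic subsystem of $\Phi$ — which holds since $\Phi(x)$ is a standard parabolic of $\Psi$ by hypothesis and $\Psi$ is itself standard parabolic in $\Phi$ by (1); and the inequality $n_x(\alpha+\beta)\leq\max\{n_x(\alpha),n_x(\beta)\}$ for all $\alpha,\beta\in\Phi^+$ with $\alpha+\beta\in\Phi^+$. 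For this last inequality I would split into cases according to whether $\alpha,\beta$ lie in $\Psi$ or not. If both lie in $\Psi$, then $\alpha+\beta\in\Psi$ and the inequality holds by quasi-convexity of $x$ for $\Psi^+$, noting $n_x$ computed in $\Psi$ agrees with $n_x$ computed in $\Phi$ for roots in $\Psi$ (by (1), $x$ preserves $\Psi$ and $\Psi^\pm\subseteq\Phi^\pm$). If neither lies in $\Psi$, apply (3). If exactly one lies in $\Psi$, say $\alpha\in\Psi$, $\beta\notin\Psi$: then $\alpha+\beta\notin\Psi$ (as $\langle e,\alpha+\beta\rangle=\langle e,\beta\rangle\neq 0$), and since $x(\Psi)=\Psi$, applying $x^i$ shows $x^i(\alpha+\beta)\in\Phi^-$ forces $x^i(\beta)\in\Phi^-$ for the relevant range, giving $n_x(\alpha+\beta)\leq n_x(\beta)\leq\max\{n_x(\alpha),n_x(\beta)\}$ — more carefully, $x^i(\alpha+\beta)=x^i(\alpha)+x^i(\beta)$ with $x^i(\alpha)\in\Psi$, and at $i=n_x(\beta)$ one has $x^i(\beta)\in\Phi^-$; Lemma~\ref{basic}(1) applied with $x^i(\alpha)\in\Phi(x^i\text{-orbit of }\Psi)$... — rather than belabor this I would simply invoke the $\Psi$-regularity of $e$ to reduce everything to signs of $\langle x^{-i}(e),\cdot\rangle$ uniformly, which treats all three cases at once. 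Applying the same to $x^{-1}$ (noting $V_{x^{-1}}^\theta=V_x^\theta$, so the same $\Psi$ and the same regular point work) gives convexity of $x$ for $\Phi^+$, proving (4).
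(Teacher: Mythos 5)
Your parts (1) and (2) follow essentially the paper's route (reduce everything to the signs of $(x^{-i}(e),\g)$ using $e\in\overline{C_0}$ and $\Phi$-regularity), and the fact you flag as delicate in (2) is indeed provable: on $V_x^\th$ the sequence $f_\g(i)=(x^{-i}(e),\g)$ satisfies $f_\g(i+1)+f_\g(i-1)=2\cos\th\, f_\g(i)$, so it is a nonzero sinusoid of frequency $\th\in(0,\pi]$ and must take negative values at integers (your appeal to "no nonzero fixed vector" alone is a bit loose, but the idea is right). The genuine gap is in (3). Your "more direct" argument asserts that the first index where $f_\a+f_\b$ goes negative is at most "the first index where both summands have gone negative, which is $\max\{n_x(\a),n_x(\b)\}$". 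That identification is false: at $i=\max\{n_x(\a),n_x(\b)\}$ the summand that turned negative earlier may have returned to being positive (the orbit moves by the angle $\th$ at each step and can leave, or skip over entirely, the region where both roots are negative, since that region is an arc whose length may be smaller than $\th$); indeed an integer time at which both summands are simultaneously negative need not exist at all. Nor can the inequality follow from the pointwise identity $f_{\a+\b}=f_\a+f_\b$ together with "positivity sets are intervals of the rotation" in the way you state: for general sequences with $f_{\a+\b}=f_\a+f_\b$, prescribing first-negative times, the conclusion fails (e.g. $f_\a=(3,-1,5,\dots)$, $f_\b=(3,5,-1,\dots)$ has the sum positive past $\max$). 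Your fallback phrase "combined with the convexity argument from the other side" is not an argument.

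What is missing is precisely the geometric input the paper supplies: working in the plane $U$ spanned by the $x$-orbit of $e$ (which is $2$-dimensional when $0<\th<\pi$; the case $\th=\pi$ is trivial since then $x(e)=-e$ and all $n_x$ equal $1$), the line $H_{\a+\b}\cap U$ meets only the two mixed-sign components of $U\setminus\bigl((H_\a\cap U)\cup(H_\b\cap U)\bigr)$, i.e. it lies "between" $H_\a\cap U$ and $H_\b\cap U$ in the pencil of lines through the origin. Consequently the rotating orbit of $e$, which starts in the component where $\a,\b>0$, crosses $H_{\a+\b}\cap U$ no later than the later of its first crossings of $H_\a\cap U$ and $H_\b\cap U$, which gives $\min\{n_x(\a),n_x(\b)\}\le n_x(\a+\b)\le\max\{n_x(\a),n_x(\b)\}$. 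You never state or prove this betweenness, and without it (3) is unproved. Since your (4) rests on (3), the gap propagates there as well; the mixed case $\a\in\Psi$, $\b\notin\Psi$ that you leave vague is, by contrast, easy to repair: $\Psi$ consists of roots vanishing on $U$, so $(x^{-i}(e),\a)=0$ for all $i$ and hence $n_x(\a+\b)=n_x(\b)$.
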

\begin{proof}
    By definition, $x(V_x^\th) = V_x^\th$. Hence $x(\Psi) = \Psi$. Let $e \in \overline{C_0}$ be a $\Phi$-regular point of $V_x^\th$. Then $\Psi = \Phi_e = \<\a\in \Phi ; \<e, \a\> = 0\>$ and hence is a standard parabolic root subsystem. Thus (1) is proved.

    Let $U$ be the linear subspace of $V$ spanned by $x^i(e)$ for all $i \in \BZ$. Let $\g \in \Phi^+\setminus \Psi$. Set $L_\g = H_{\g} \cap U$. Then $L_\g$ is a hyperplane of $U$. Since $e$ is a regular point in $V_x^\th = x(V_x^\th)$, $(e,x^{i}(\g)) \neq 0$ for all $i\in\BZ$. We claim that 

    (a) $n_x(\g)$ equals the minimal integer $i \ge 1$ such that $e, x^{-i}(e)$ are separated by $L_\g$ (such $i$ exists since $0 < \th \le \pi$).

    Indeed, as $e \in \overline{C_0}$, for any $i\in\BZ$, $x^{i}(\g)\in\Phi^- $ if and only if $(x^{-i}(e),\g) = (e, x^i(\g)) <0$, that is, $e$ and $x^{-i}(e)$ are separated by $L_\g$. Hence (a) follows. In particular, $\g\notin \Phi(x)$. So (2) is proved.
    
    We now prove (3). Let $\a, \b \in \Phi^+ \setminus \Psi$ be such that $\a + \b \in \Phi^+$. First assume that $L_\a = L_\b$, then $L_{\a + \b} = L_\a$. By (a), we have $n_x(\a + \b) = n_x(\b) \le \max\{n_x(\a), n_x(\b)\}$. Now assume that $L_\a \neq L_\b$. If $\th = \pi$, then $x(e) = -e$ and it follows from (a) that $n_x(\a + \b) = n_x(\a) = n_x(\b)$. Assume further that $0 < \th < \pi$. Then $\dim U = 2$ and $U \setminus (L_\a \cup L_\b)$ has four connected components, which we label by $D$, $D'$, $-D$ and $-D'$. Assume that $e \in D$. Then every point $v\in D$ must satisfies $\<v,\a\>>0$ and $\<v,\b\>>0$. Hence we have

    (b) $L_{\a + \b}$ intersects $D'$ and $-D'$. 

    Suppose that $x^i(e)$ and $e$ are separated by $L_{\a+\b}$ for some $i \in \BZ$, then (b) implies that they are also separated by $L_\a$ or $L_\b$. Using (a), we deduce that $n_x(\a + \b) \le \max\{n_x(\a), n_x(\b)\}$. Hence (3) is proved.

    Note that $V_{x}^\th = V_{x\i}^\th$ and $\Phi(x) = \Phi(x\i)$ by definition. Then (4) follows from (1), (2), (3).
    \end{proof}

For any subset $R \subseteq \Phi$ and $y\in W\rtimes\<\d\>$, we write $R^y = \{\a \in R; y(\a) = \a\}$. The main result of this section is the following.
\begin{theorem}\label{thm:exist}
    For any $x \in W \rtimes \<\d\>$, there exists a $W$-conjugate $y$ of $x$ such that $y$ is convex (for $\Phi^+$) and $\Phi(y) = \Phi^y$.
\end{theorem}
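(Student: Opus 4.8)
\emph{Plan.} I would argue by induction on $\rank\Phi=\dim V$, using Lemma \ref{lem:rotation} to replace $\Phi$ by a \emph{proper} standard parabolic subsystem on which $x$ still acts. Two set-up choices make the induction close. First, I would prove the statement for an arbitrary $x$ in the automorphism group $\Aut(\Phi)$ of the root system (equivalently, allowing the group of diagram automorphisms in the semidirect product to vary): the parabolic subsystems arising below come with automorphisms that need not be restrictions of $\d$, while $W$-conjugacy and the quantities $\Phi(x)$, $\Phi^x$, convexity depend only on the action of $x$ on $V$. Second, I would carry along the strengthened conclusion $\Phi(y)=\Phi^y$, since this is exactly what lets one re-apply the inductive hypothesis inside a parabolic. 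Throughout one uses $\Phi^y\subseteq\Phi(y)$ for every $y\in\Aut(\Phi)$, as a root fixed by $y$ has all its $\langle y\rangle$-iterates equal to itself. Base case: if $x$ acts trivially on $V$ (in particular if $\dim V=0$), then $\Phi^x=\Phi(x)=\Phi=\Phi\cap\BZ\D$ and the quasi-convexity inequalities are vacuous because $\Phi^+\setminus\Phi(x)=\emptyset$; so $y=x$ works.

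\emph{Reduction to a parabolic.} Now suppose $x$ acts non-trivially, so $V_x^\th\neq0$ for some $0<\th\le\pi$; fix such a $\th$. The points of $V_x^\th$ lying on some $H_\g$ with $V_x^\th\not\subseteq H_\g$ form a finite union of proper subspaces of $V_x^\th$, so $V_x^\th$ has a $\Phi$-regular point; conjugating $x$ by a suitable element of $W$ I may assume $\overline{C_0}$ contains a $\Phi$-regular point $e$ of $V_x^\th$. Put $\Psi=\{\g\in\Phi;V_x^\th\subseteq H_\g\}=\Phi\cap(V_x^\th)^\perp=\Phi_e$ and $\Psi^+=\Psi\cap\Phi^+$. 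Since $V_x^\th\neq0$ and $\Phi$ spans $V$, $\Psi$ is a \emph{proper} standard parabolic subsystem, so $\rank\Psi<\rank\Phi$. By Lemma \ref{lem:rotation} we have $x(\Psi)=\Psi$, $\Phi(x)\subseteq\Psi$, the inequality $n_x(\a+\b)\le\max\{n_x(\a),n_x(\b)\}$ for all $\a,\b\in\Phi^+\setminus\Psi$ with $\a+\b\in\Phi^+$, and — the key point — $x$ is convex for $\Phi^+$ as soon as it is convex for $\Psi^+$.

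\emph{Applying the inductive hypothesis.} The restriction $x':=x|_{\BR\Psi}$ lies in $\Aut(\Psi)$, so by induction there is $w\in W_\Psi$ making $y':=wx'w\i$ convex for $\Psi^+$ with $\Psi(y')=\Psi^{y'}$. Set $y:=wxw\i$, a $W$-conjugate of the original element; then $y|_{\BR\Psi}=y'$ and $y(\Psi)=\Psi$. Since $W_\Psi$ fixes $(\BR\Psi)^\perp\supseteq V_x^\th$ pointwise, $V_y^\th=V_x^\th$ and $e$ remains a $\Phi$-regular point of $V_y^\th$ in $\overline{C_0}$, so the conclusions of Lemma \ref{lem:rotation} hold for $y$ as well. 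The convexity conditions for $\Psi^+$ involve only the action of $y$ on the roots of $\Psi$ and on their $\Psi^+$-signs, hence only $y|_{\BR\Psi}=y'$; thus $y$ is convex for $\Psi^+$, and Lemma \ref{lem:rotation}(4) makes $y$ convex for $\Phi^+$. Finally, $\Phi(y)\subseteq\Psi$ together with $y(\Psi)=\Psi$ and $\Psi^+=\Psi\cap\Phi^+$ gives $\Phi(y)=\Psi(y)=\Psi(y')$; using in addition $\Phi^y\subseteq\Phi(y)\subseteq\Psi$ gives $\Phi^y=\{\g\in\Psi;y(\g)=\g\}=\Psi^{y'}$. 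By the inductive hypothesis these coincide, so $\Phi(y)=\Phi^y$, completing the induction.

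\emph{Main obstacle.} Beyond Lemma \ref{lem:rotation}, the substance is in getting the inductive framework right: strengthening the statement to include $\Phi(y)=\Phi^y$ (without which the hypothesis is too weak to transfer back through a parabolic), allowing the twisting automorphism to vary so that $x|_{\BR\Psi}$ is again an instance of the theorem, and observing that conjugation by $W_\Psi$ leaves $V_x^\th$ pointwise fixed so that Lemma \ref{lem:rotation} can be reused after the inductive conjugation. The remaining steps — existence of the regular point, and the closure/positivity bookkeeping identifying $\Phi(y)$ with $\Psi(y')$ and $\Phi^y$ with $\Psi^{y'}$ — are routine.
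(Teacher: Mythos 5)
Your proof is correct and follows essentially the same route as the paper's: induct (you on $\rank\Phi$, the paper on $\sharp W$ -- a cosmetic difference), conjugate so that $\overline{C_0}$ contains a $\Phi$-regular point of some $V_x^\th\neq 0$, apply the inductive hypothesis inside the standard parabolic subsystem $\Psi=\Phi_e$ with the strengthened conclusion $\Phi(y)=\Phi^y$, and conclude via Lemma \ref{lem:rotation}(2),(4). The extra bookkeeping you supply (restricting to $\Aut(\Psi)$, i.e.\ letting the twisting automorphism vary, and checking that $W_\Psi$-conjugation fixes $V_x^\th$ pointwise) is exactly what the paper's terser argument leaves implicit.
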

\begin{proof}
    We argue by induction on $\sharp W$. If $W = \{1\}$ or $V = V_x^0$, then $\Phi = \Phi^x$ and the statement is trivial. Otherwise, there exists $0 < \th \le \pi$ such that $V_x^\th \neq \{0\}$. Up to $W$-conjugation, we can assume that $\overline{C_0}$ contains a regular point $e$ of $V_x^\th$. Note that $W_e$ is a standard proper subgroup of $W$. By induction hypothesis, up to $W_e$-conjugation we can assume further that $\Phi_e(x) = (\Phi_e)^x$ and $x$ is convex for $\Phi_e^+: = \Phi_e \cap \Phi^+$. By Lemma \ref{lem:rotation} (2) and (4), we deduce that $\Phi(x) = \Phi^x$ and $x$ is convex for $\Phi^+$ as desired.
\end{proof}

\section{Good position elements} \label{sec:good}
Good position elements in $W$ are first introduced in \cite{HL}. They play a crucial role in the construction of transversal slices of unipotent orbits by He-Lusztig \cite{HL} and Duan \cite{Duan}.  

In this section, we show that good position elements are convex. As an application, we give a new proof of a classical result by He and Lusztig that $\Xi_\dx$ is an isomorphism for any elliptic minimal length element $x$. Finally, we provide a counterexample showing that the set of convex elements is strictly larger than the set of good position elements.

\subsection{Good position elements}
Let $x \in W \rtimes \<\d\>$. We say a sequence $\underline{\th} = (\th_1,\th_2,\ldots,\th_r)$ in $(0, \pi]$ is $\Phi$-admissible for $x$ if $\sum_{j=1}^r V_x^{\th_j}$ contains a $\Phi$-regular point of $(V^x)^\perp = \bigoplus_{0 < \th \le \pi} V_x^\th$.
Note that any enumeration of the set $\{\th\in (0,\pi] ; V_x^{\th} \ne 0\}$ is a $\Phi$-admissible sequence for $x$. By convention, the empty sequence is $\Phi$-admissible for $x$ if and only if $x=1$.

\begin{definition}[{\cite[\S5.2]{HN12}, \cite[\S2.1]{Duan}}]\label{def:good}
Let $x \in W \rtimes\<\d\>$ and $\underline{\th} = (\th_1,\th_2,\ldots,\th_r)$ be a $\Phi$-admissible sequence for $x$. We say $x$ is \emph{at good position} with respect to $(\Phi^+,\underline{\th})$ if $\overline{C_0}$ contains some $\Phi$-regular point of $\sum_{j=1}^{i} V_x^{\th_j}$ for each $i$. \footnote{Our definition is slightly different from that in \cite{HN12} and \cite{Duan}.}
\end{definition}

\begin{example}\label{ex:A3}
In the case of type $A_3$, let $x = s_2s_1s_3$. We have $V = \{(a,b,c,d)\in \BR^4; a+b+c+d=0\} = V_{x}^{\pi/2}\oplus V_{x}^{\pi}$, $V_{x}^{\pi} = \{(a,-a,-a,a); a\in\BR\}$ and $V_{x}^{\pi/2} = \{(a,b,-b,-a); a,b\in\BR\}$. Then $x$ is at good position with respect to the sequence $(\pi/2,\pi)$ but is not at good position with respect to the sequence $(\pi,\pi/2)$. Let $x' = s_1s_2s_3$, one can check that $x'$ is not at good position with respect to $(\pi/2,\pi)$ or $(\pi,\pi/2)$. %In this case, we have $h_0 = 6$, $h_1 = 0$, $h_2 = 0$ and $\ell(x) = 3 = \frac{\pi/2}{\pi}6$ as desired. 
\end{example}

The following lemma provides a recursive definition for good position elements. The proof is straightforward by definition.
\begin{lemma} \label{lem:inductive}
    Let $x \in W \rtimes\<\d\>$ and let $\underline{\th} = (\th_1,\th_2,\ldots,\th_r)$ be an $\Phi$-admissible sequence for $x$. Then $x$ is at good position with respect to $(\Phi^+, \underline{\th})$ if and only if $\overline{C_0}$ contains a regular point of $V_x^{\th_1}$ and $x$ is at good position with respect to $(\Phi_1^+, \underline{\th}_{\ge 2})$, where $\Phi_1 = \{\g \in \Phi; V_x^{\th_1} \subseteq H_\g\}$ and $\underline{\th}_{\ge 2} = (\th_2, \th_3, \dots, \th_r)$.
\end{lemma}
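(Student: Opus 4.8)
The plan is to unwind Definition~\ref{def:good} on both sides and, for each index $i$, pass explicitly between a $\Phi$-regular point of $V_x^{\th_1}\oplus\big(\sum_{j=2}^{i}V_x^{\th_j}\big)$ lying in $\overline{C_0}$ and a $\Phi_1$-regular point of $\sum_{j=2}^{i}V_x^{\th_j}$ lying in the dominant chamber $\overline{C_1}$ attached to $\Phi_1^+$. Write $K_i=\sum_{j=2}^{i}V_x^{\th_j}$ for $2\le i\le r$; since the $\th_j$ are distinct, $\sum_{j=1}^{i}V_x^{\th_j}=V_x^{\th_1}\oplus K_i$ is an orthogonal decomposition, and by construction $\Phi_1=\{\g\in\Phi:\g\perp V_x^{\th_1}\}$ equals $\Phi_e$ for any $\Phi$-regular point $e$ of $V_x^{\th_1}$, so that $\g\in\Phi_1\iff(e,\g)=0$ for such $e$; when $e\in\overline{C_0}$ one moreover has $(\a,e)>0$ for all $\a\in\Phi^+\setminus\Phi_1$. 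I would first note, directly from the definitions, that the tail $\underline{\th}_{\ge 2}$ is again admissible in the relevant sense once $\underline{\th}$ is $\Phi$-admissible for $x$, so the right-hand condition is meaningful and, by Definition~\ref{def:good}, amounts to: $\overline{C_1}$ contains a $\Phi_1$-regular point of $K_i$ for every $i\ge 2$. Since the $i=1$ clause of Definition~\ref{def:good} for $(\Phi^+,\underline{\th})$ is exactly ``$\overline{C_0}$ contains a $\Phi$-regular point of $V_x^{\th_1}$'', the lemma reduces to the equivalence
\[
\overline{C_0}\ \text{contains a}\ \Phi\text{-regular point of}\ V_x^{\th_1}\oplus K_i
\iff
\overline{C_1}\ \text{contains a}\ \Phi_1\text{-regular point of}\ K_i,
\]
to be proved for each fixed $i\ge 2$, granting that some $\Phi$-regular point $e\in\overline{C_0}$ of $V_x^{\th_1}$ exists.

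For the forward implication I would take such an $f\in\overline{C_0}$, decompose it orthogonally as $f=g+h$ with $g\in V_x^{\th_1}$ and $h\in K_i$, and verify that $h$ works: for $\a\in\Phi_1^+$ one has $(\a,g)=0$, so $(\a,h)=(\a,f)\ge 0$ and $h\in\overline{C_1}$; and if $\g\in\Phi_1$ with $h\in H_{\g}$ then $(\g,f)=(\g,g)+(\g,h)=0$, so $f\in H_{\g}$ and the $\Phi$-regularity of $f$ forces $V_x^{\th_1}\oplus K_i\subseteq H_{\g}$, hence $K_i\subseteq H_{\g}$.

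For the reverse implication I would take a $\Phi_1$-regular point $h\in\overline{C_1}$ of $K_i$, together with $e$ as above, and set $f=\l e+h$ for a scalar $\l>0$. For $\a\in\Phi_1^+$ one gets $(\a,f)=(\a,h)\ge 0$, while for $\a\in\Phi^+\setminus\Phi_1$ one has $(\a,e)>0$ and hence $(\a,f)>0$ as soon as $\l$ is large; so $f\in\overline{C_0}$ once $\l\gg 0$, and clearly $f\in V_x^{\th_1}\oplus K_i$. Choosing $\l$ in addition off the finite set $\{-(\g,h)/(\g,e):\g\in\Phi\setminus\Phi_1\}$ guarantees $(\g,f)\neq 0$ for every $\g\in\Phi\setminus\Phi_1$; then any $\g\in\Phi$ with $f\in H_{\g}$ lies in $\Phi_1$, so $(\g,e)=0$ forces $(\g,h)=0$, whence $h\in H_{\g}$, the $\Phi_1$-regularity of $h$ gives $K_i\subseteq H_{\g}$, and $\g\in\Phi_1$ gives $V_x^{\th_1}\subseteq H_{\g}$ by definition; together $V_x^{\th_1}\oplus K_i\subseteq H_{\g}$, so $f$ is $\Phi$-regular in $V_x^{\th_1}\oplus K_i$. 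Collecting these equivalences over all $i$ gives the lemma.

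The only step needing genuine care is the choice of the scalar $\l$ in the reverse implication: it must be simultaneously large enough to keep $f$ dominant and chosen off finitely many exceptional values so that $f$ does not accidentally meet a root hyperplane $H_{\g}$ with $\g\notin\Phi_1$. Everything else is a formal consequence of the orthogonality of the subspaces $V_x^{\th_j}$ and of the definitions of ``regular point'' and ``good position'', which is why the argument is, as claimed, straightforward by definition.
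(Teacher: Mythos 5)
Your proof is correct, and it supplies exactly the verification the paper leaves implicit: the paper's own ``proof'' is only the remark that the lemma is straightforward from the definitions, and your two-step argument---dropping the $V_x^{\th_1}$-component of a regular point for the forward direction, and perturbing by $f=\lambda e+h$ with $\lambda\gg 0$ for the converse---is the intended way to make that precise. One cosmetic remark: the extra genericity condition on $\lambda$ is redundant, since for $\lambda$ sufficiently large one already has $(\g,f)\neq 0$ for every $\g\in\Phi\setminus\Phi_1$ (positive for $\g\in\Phi^+\setminus\Phi_1$, negative for $\g\in\Phi^-\setminus\Phi_1$).
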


The main result for this section is the following.
\begin{theorem} \label{thm:constr}
Let $x \in W\rtimes\<\d\>$ and $\underline{\theta}=(\th_1,\ldots,\th_r)$ be a $\Phi$-admissible sequence for $x$. If $x$ is at good position with respect to $(\Phi^+, \underline{\th})$, then $x$ is convex for $\Phi^+$ and $\Phi(x) = \Phi^{x}$.
\end{theorem}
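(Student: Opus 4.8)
The plan is to prove this by induction on $\sharp W$, using Lemma \ref{lem:rotation} and the recursive description of good position elements in Lemma \ref{lem:inductive}. The base case is when $W = \{1\}$ or $V = V_x^0$: then $\Phi = \Phi^x = \Phi(x)$ and $x$ is trivially convex for $\Phi^+$. For the inductive step, suppose $x$ is at good position with respect to $(\Phi^+, \underline\th)$ with $\underline\th = (\th_1, \dots, \th_r)$ a nonempty $\Phi$-admissible sequence. By Lemma \ref{lem:inductive}, $\overline{C_0}$ contains a $\Phi$-regular point of $V_x^{\th_1}$, and $x$ is at good position with respect to $(\Phi_1^+, \underline\th_{\ge 2})$, where $\Phi_1 = \{\g \in \Phi; V_x^{\th_1} \subseteq H_\g\}$. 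Note $\Phi_1$ is exactly the set $\Psi$ of Lemma \ref{lem:rotation} applied with $\th = \th_1$.

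The first step is to set up the induction. The subgroup $W_1$ generated by the reflections in $\Phi_1$ is a standard parabolic subgroup of $W$ (by Lemma \ref{lem:rotation} (1)), and since $V_x^{\th_1} \neq \{0\}$ it is a proper subgroup, so $\sharp W_1 < \sharp W$. However, there is a subtlety: the induction hypothesis must be applied to $x$ acting on the root system $\Phi_1$, which requires that $x$ normalize $\Phi_1$ and that $\underline\th_{\ge 2}$ be $\Phi_1$-admissible for $x$. The former is Lemma \ref{lem:rotation} (1); the latter should follow from the definition of $\Phi$-admissibility together with the observation that $V^x \cap V = V^x$ restricted to $\BR\Phi_1$ behaves correctly — this is a compatibility check I would spell out carefully. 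Granting this, the induction hypothesis gives that $x$ is convex for $\Phi_1^+$ and $\Phi_{\Phi_1}(x) := \{\pm\g; \g \in \Phi_1^+,\, x^i(\g) \in \Phi_1^+ \ \forall i\} = \Phi_1^x$.

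The second step is to transfer convexity from $\Phi_1^+$ up to $\Phi^+$. Here I would invoke Lemma \ref{lem:rotation} (4) directly: it says precisely that if $x$ is convex for $\Psi^+ = \Phi_1^+$, then $x$ is convex for $\Phi^+$. For this to apply, I must check its hypothesis that $\overline{C_0}$ contains a $\Phi$-regular point of $V_x^{\th_1}$, which is given, and that the notion of "convex for $\Phi_1^+$" used in Lemma \ref{lem:rotation} matches the one delivered by the induction — this is where the key point $\Phi(x) \subseteq \Psi$ from Lemma \ref{lem:rotation} (2) enters, ensuring $\Phi(x) = \Phi_{\Phi_1}(x)$. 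Combined with the inductive conclusion $\Phi_{\Phi_1}(x) = \Phi_1^x$ and the evident containment $\Phi^x \subseteq \Phi_1$ (any root fixed by $x$ is orthogonal to $V_x^{\th_1}$ since $\th_1 > 0$), we get $\Phi(x) = \Phi^x$ as well.

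The main obstacle I anticipate is not any single deep step but rather the bookkeeping around restricting the whole setup — root system, positive system, admissible sequence, the decomposition $V = \bigoplus_\th V_x^\th$ — from $\Phi$ to the parabolic $\Phi_1$, and verifying that "good position for $(\Phi_1^+, \underline\th_{\ge 2})$" in the sense of Definition \ref{def:good} is exactly what Lemma \ref{lem:inductive} hands us, so that the induction hypothesis genuinely applies. Once these identifications are in place, the result falls out of Lemma \ref{lem:rotation} parts (2) and (4) with essentially no further work.
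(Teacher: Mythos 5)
Your argument follows the same route as the paper's proof: peel off $\th_1$ via Lemma \ref{lem:inductive}, apply the induction hypothesis to the parabolic subsystem $\Phi_1$, and transfer back with Lemma \ref{lem:rotation} (2) and (4). The compatibility points you flag (that $x$ normalizes $\Phi_1$, that $\Phi(x)=\Phi_{\Phi_1}(x)$ via Lemma \ref{lem:rotation} (2), that $\Phi^x\subseteq\Phi_1$) are exactly the ones the paper also relies on, and your treatment of them is correct.

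The one genuine defect is the choice of induction variable. You induct on $\sharp W$ and justify the strict decrease by asserting $V_x^{\th_1}\neq\{0\}$, but nothing in the definition of a $\Phi$-admissible sequence or of good position forces the \emph{first} entry to be nondegenerate: if $V_x^{\th_1}=\{0\}$, the $i=1$ condition in Definition \ref{def:good} is vacuous (the point $0\in\overline{C_0}$ is $\Phi$-regular in $\{0\}$), the sequence can still be admissible, and then $\Phi_1=\Phi$ and $W_1=W$, so your induction makes no progress. For instance, in type $A_1$ the element $s_1$ is at good position with respect to $(\pi/2,\pi)$ even though $V_{s_1}^{\pi/2}=\{0\}$. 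The paper sidesteps this by inducting on the length $r$ of the admissible sequence, which drops by one at each application of Lemma \ref{lem:inductive} regardless of whether $V_x^{\th_1}$ vanishes (base case $r=0$, i.e.\ $x=1$). Your proof is repaired either by switching to induction on $r$ (or on the pair $(\sharp W, r)$), or by first discarding all entries $\th_j$ with $V_x^{\th_j}=\{0\}$, which changes neither the partial sums $\sum_{j\le i}V_x^{\th_j}$ nor admissibility nor the good-position property; after that reduction your $\sharp W$ argument goes through as written.
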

\begin{proof}
    We argue by induction on $r$. We first assume that $r = 0$. Then $x = 1$ and the statement is trivial. Now assume $r>0$. Suppose that the statement holds for all $r' < r$. Let notation be as in Lemma \ref{lem:inductive}. Then $\overline{C_0}$ contains a $\Phi$-regular point of $V_x^{\th_1}$ and $x$ is at good position with respect to $(\Phi_1^+, \underline{\th}_{\ge 2})$. By induction hypothesis, $x$ is convex for $\Phi_1^+$ and $\Phi_1(x) = (\Phi_1)^x$. By Lemma \ref{lem:rotation} (2), (4), $x$ is convex for $\Phi^+$ and $\Phi(x) = \Phi^x$ as desired.   
\end{proof}

%Combining Theorem \ref{thm:main} and Theorem \ref{thm:constr}, we recover Duan's result \cite[Theorem 1.2 (1)]{Duan}.
\begin{remark} \label{rmk:compare}
\begin{enumerate}
\item The statement $\Phi(x) = \Phi^{x}$ in Theorem \ref{thm:constr} implies that if $x$ is at good position, then the variety $\CS_\dx$ (see \S\ref{subsec:cross-section}) coincides with the variety $S_{\mathrm{Br}}^D(\tilde{b})$ in \cite[Definition 4.3]{Duan}.
\item Let $x\in W\rtimes \<\d\>$ and $\underline{\th}$ be an admissible sequence for $x$. By {\cite[Lemma 5.1]{HN12}}, there exists a $W$-conjugate $y$ of $x$ that is at good position with respect to $(\Phi^+, \underline{\th})$. Then Theorem \ref{thm:constr} implies that $y$ is convex and $\Phi(y) = \Phi^y$. This gives essentially the same construction as in the proof of Theorem \ref{thm:exist}.

\end{enumerate}
\end{remark}

\begin{comment}
There is a well-known result that there exists a Coxeter element $c$ of $W$ and a two-dimensional subspace $K\subseteq V$, such that $\overline{C_0}$ contains some regular point of $K$ and $c$ acts on $K$ as rotation by $2\pi/h$, where $h$ is the Coxeter number (see for example \cite[\S3.17]{Humphreys1990}). We can see that the proof of Corollary \ref{cor:exist} for Coxeter class $\CO$ recovers this result. However, not all Coxeter elements are at good position (cf. Example \ref{ex:A3}).
\end{comment}

\subsection{A result by He-Lusztig}
In this subsection, we provide a new and simpler proof of the main result of \cite{HL} by He and Lusztig.

Recall that $x \in W\rtimes \<\d\>$ is called elliptic if $x$ does not fix any non-zero vector in $V$.
\begin{lemma}\label{lem:elliptic}
Let $x \in W \rtimes \<\d\>$ be elliptic. Then $\Phi(x) = \emptyset$.    
\begin{proof}
Otherwise, we may choose $\g \in \Phi^+(x)$ such that $x^i(\g) \in \Phi^+$ for all $i \in \BZ$. Let $k \in \BZ_{\ge 1}$ such that $x^k = \id$. Then the sum $\sum_{i=0}^{k-1} x^i(\g)$ of $x$-orbit of $\g$ is a non-zero vector fixed by $x$, a contradiction. 
\end{proof}
\end{lemma}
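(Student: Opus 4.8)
The plan is a short argument by contradiction. Suppose $\Phi(x) \ne \emptyset$. Since $x$ acts linearly on $V$ it commutes with $\g \mapsto -\g$, so $\g \in \Phi^-(x)$ precisely when $-\g \in \Phi^+(x)$; hence $\Phi^+(x) \ne \emptyset$, and I may fix a positive root $\g$ with $x^i(\g) \in \Phi^+$ for every $i \in \BZ$.

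The key point is that $x$ acts on $V$ with finite order: $W$ is finite and $\d$ induces an automorphism of the finite root system $\Phi$, hence of $V$, of finite order, so $x^k = \id$ on $V$ for some $k \in \BZ_{\ge 1}$. I would then form the orbit sum $v := \sum_{i=0}^{k-1} x^i(\g)$, which is visibly $x$-invariant, and note that $v \ne 0$ because each summand lies in $\Phi^+$; equivalently, pairing with any $\rho \in C_0$ gives $(v, \rho) = \sum_{i} (x^i(\g), \rho) > 0$. Thus $x$ fixes a nonzero vector of $V$, contradicting the assumption that $x$ is elliptic.

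I do not expect a genuine obstacle in this lemma. The only points requiring a moment's care are the reduction to $\Phi^+(x) \ne \emptyset$, the finiteness of the order of $x$ on $V$ (automatic here, since the action of $\d$ on $V$ factors through the finite group $\Aut(\Phi)$), and the elementary observation that a nonempty nonnegative combination of positive roots is nonzero.
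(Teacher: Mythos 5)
Your argument is correct and follows the paper's proof essentially verbatim: choose $\g \in \Phi^+(x)$, use the finite order of $x$ on $V$, and observe that the orbit sum is a nonzero $x$-fixed vector, contradicting ellipticity. The extra details you supply (why $\Phi^+(x) \neq \emptyset$, finiteness of the order, and positivity of the orbit sum against a dominant point) are just the routine verifications the paper leaves implicit.
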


\begin{definition}\label{def:cyclic-shift}
Let $x, x' \in W \rtimes \<\d\>$ and $s \in \BS$, We write $x \rightarrow x'$ if there exist elements $x = x_1, x_2, \dots, x_k=x'$ in $W \rtimes \<\d\>$ and simple reflections $s_1,s_2,\ldots, s_{k-1}$ such that  $x_{i+1} = s_i x_i s_i$ and $\ell(x_{i+1}) \le \ell(x_i)$ for $1 \le i \le k-1$. We $x \approx x'$ if $x \rightarrow x'$ and $x' \rightarrow x$. In this case, we also say $x$ is a cyclic shift of $x'$.   
\end{definition}

For any $W$-conjugacy class $\CO$ in $W \rtimes \<\d\>$, we define $\CO_{\min} = \{y \in \CO; \ell(y) \le \ell(z), \forall z \in \CO\}$. An element $x$ is called a minimal length element if $x \in \CO_{\min}$ for some $W$-conjugacy class $\CO$.
\begin{proposition}\label{prop:elliptic}
Let $\CO$ be an elliptic $W$-conjugacy class. Then for any $x \in \CO$ there exists $x' \in \CO_{\min}$ such that $x'$ is convex and $x \rightarrow x'$.
\end{proposition}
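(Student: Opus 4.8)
The plan is to combine two known ingredients: the existence of minimal length elements reachable by cyclic shifts within an elliptic conjugacy class, and the fact (Theorem \ref{thm:constr}) that good position elements are convex. The key intermediate claim is that within an elliptic conjugacy class, every minimal length element is at good position with respect to \emph{some} admissible sequence.

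\begin{proof}[Proof proposal]
\textbf{Step 1: reduce to finding a convex minimal length element reachable from $x$.} By \cite[Lemma 5.1]{HN12} (see also \cite{HN12}, \cite{Duan}), for any $\Phi$-admissible sequence $\underline{\th}$ for $x$ there is a $W$-conjugate of $x$ at good position with respect to $(\Phi^+,\underline{\th})$; but we need the stronger statement that we can reach such an element \emph{by cyclic shifts} $x \to x'$ inside $\CO$, with $x'$ of minimal length. The plan is to invoke the structure theory of minimal length elements in (twisted) Weyl groups due to He--Nie \cite{HN12}: for an elliptic class $\CO$, every element $x\in\CO$ satisfies $x\to x'$ for some $x'\in\CO_{\min}$, and moreover (this is the part I would extract carefully) one can arrange $x'$ to be at good position with respect to a chosen admissible sequence $\underline{\th}$. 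Concretely, I would run the cyclic-shift reduction to minimal length and simultaneously track a two-dimensional (or higher) rotation subspace: using Lemma \ref{lem:rotation}'s hypothesis, one keeps the dominant chamber containing a $\Phi$-regular point of $\sum_{j\le i}V_x^{\th_j}$ at each stage.

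\textbf{Step 2: minimal length $+$ elliptic $\Rightarrow$ good position.} For an elliptic class, $V^x=0$, so $(V^x)^\perp = V$ and \emph{any} enumeration of $\{\th\in(0,\pi]: V_x^\th\ne 0\}$ is $\Phi$-admissible. The main point is that a minimal length element $x'$ in an elliptic class is automatically at good position with respect to a suitable such enumeration: one shows that $\overline{C_0}$ contains a $\Phi$-regular point of $V_{x'}^{\th_1}$ for an appropriate choice of $\th_1$, then passes to the parabolic $\Phi_1 = \{\g: V_{x'}^{\th_1}\subseteq H_\g\}$ and uses Lemma \ref{lem:inductive} inductively. This is the analogue of the classical fact that a minimal length element restricted to a rotation plane lies in the closure of the fundamental chamber; I expect this to follow from \cite[\S3, \S7]{HN12} on good position representatives of minimal length.

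\textbf{Step 3: conclude.} Once $x'\in\CO_{\min}$ is at good position, Theorem \ref{thm:constr} gives that $x'$ is convex for $\Phi^+$ (and $\Phi(x')=\Phi^{x'}=\emptyset$ by Lemma \ref{lem:elliptic}, consistent with ellipticity). Together with Step 1, $x\to x'$ with $x'$ convex and minimal length, which is the claim.

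\textbf{Main obstacle.} The delicate step is Step 1/Step 2: guaranteeing that the minimal length element we land on via cyclic shifts is simultaneously at good position for some admissible sequence, rather than merely convex for some conjugate not reachable by cyclic shifts. This requires the compatibility, established in \cite{HN12}, between the cyclic-shift reduction to $\CO_{\min}$ and the choice of rotation subspaces; I would cite the relevant lemmas of \cite{HN12} on good position elements within $\CO_{\min}$ and verify they apply in the twisted setting $W\rtimes\<\d\>$.
\end{proof}
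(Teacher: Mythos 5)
Your overall skeleton (cyclically shift $x$ to a good position element of minimal length, then apply Theorem \ref{thm:constr} plus Lemma \ref{lem:elliptic}) is the same as the paper's, but the way you propose to justify the key step is flawed. The intermediate claim you announce at the outset and develop in Step 2 --- that in an elliptic class \emph{every} minimal length element is at good position with respect to some admissible sequence --- is false. The paper itself records a counterexample immediately after Proposition \ref{prop:elliptic}: in type $C_3$ the element $s_3s_2s_3s_1s_2$ is elliptic and of minimal length in its conjugacy class, yet it is not convex; since good position implies convex (Theorem \ref{thm:constr}), it cannot be at good position for any admissible sequence. So the implication ``minimal length $+$ elliptic $\Rightarrow$ good position'' that your Step 2 rests on cannot be proved, and your proposed inductive argument via Lemma \ref{lem:inductive} would break down at the first stage: minimality of length does not force $\overline{C_0}$ to contain a $\Phi$-regular point of a rotation eigenspace of $x'$.

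What is actually true, and what the paper uses, is the existential statement already half-formulated in your Step 1: by \cite[Proof of Theorem 3.2(1)]{HN12}, for the ascending enumeration $\underline{\th}$ of $\{\th\in(0,\pi]; V_x^{\th}\neq 0\}$ there exists \emph{some} $x'\in\CO_{\min}$ with $x\to x'$ such that $x'$ is at good position with respect to $(\Phi^+,\underline{\th})$; the compatibility between the cyclic-shift reduction and the choice of rotation subspaces is established there, not something to be re-derived from minimality alone. If you replace Step 2 by this citation (and drop the sketch of ``tracking a rotation subspace along the reduction,'' which as written is not an argument), Step 3 then goes through exactly as you state and you recover the paper's proof.
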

\begin{proof}
Let $\underline{\th} = (\th_1,\th_2,\ldots,\th_r)$ be the ascending enumeration of $\{\th\in (0,\pi]; V_{x}^{\th}\ne0\}$. Thanks to \cite[Proof of Theorem 3.2(1)]{HN12}, there exists $x' \in \CO_{\min}$ such that $x \rightarrow x'$ and $x'$ is at good position with respect to $(\Phi^+, \underline{\th})$. Then Theorem \ref{thm:constr} implies that $x'$ is convex as desired. 
\end{proof}
\begin{remark}
\begin{enumerate}
    \item The condition that $\CO$ is elliptic in Proposition \ref{prop:elliptic} is necessary. For example, in the case of type $A_2$. Consider the conjugacy class $\CO = \{s_1,s_2,s_1s_2s_1\}$. There is no convex element in $\CO_{\min} = \{s_1,s_2\}$. Indeed, $\Phi(s_1) = \{\pm\a_2,\pm\a_1\pm\a_2\}$ is not a standard parabolic root subsystem (and similar for $s_2$). However, the element $s_1s_2s_1$ is at good position with respect to the sequence $\underline{\th} = (\pi)$ and is convex.
    \item In the setting of Proposition \ref{prop:elliptic}, not all elements in $\CO_{\min}$ are convex. As an example, in the case of type $C_3$, the element $x =  s_3s_2s_3s_1s_2$ is elliptic and is of minimal length in its conjugacy class but is not convex. 
\end{enumerate}
\end{remark}

We can now prove the following classical result by He and Lusztig.
\begin{theorem}[{\cite[Theorem 3.5]{HL}}]\label{thm:HL} Let $x$ be an elliptic minimal length element. Then the map
\begin{align*}
U \times \dx U_{\Phi^+\cap x\i(\Phi^-)} \longrightarrow U \dx U, \quad (y,z) \mapsto  y z y\i
\end{align*}
is an isomorphism.
\end{theorem}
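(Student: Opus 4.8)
The plan is to deduce Theorem~\ref{thm:HL} as an immediate special case of the machinery already built in Sections~\ref{sec:convex} and~\ref{sec:existence}. First I would observe that an elliptic minimal length element $x$ is convex: by Proposition~\ref{prop:elliptic}, every element of the elliptic conjugacy class $\CO$ of $x$ can be brought by a sequence $x \to x'$ to some convex $x' \in \CO_{\min}$; but since $x$ is already of minimal length, the relation $x \to x'$ is in fact $x \approx x'$, i.e.\ $x$ is a cyclic shift of the convex element $x'$, and one checks that convexity (for $\Phi^+$) is preserved under cyclic shifts within $\CO_{\min}$ — alternatively, and more cleanly, one applies Proposition~\ref{prop:elliptic} directly to the elliptic class and notes that the conclusion of the theorem is invariant under replacing $\dx$ by a conjugate lift, so it suffices to prove it for one convex representative in $\CO_{\min}$. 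Either way we reduce to the case where $x$ itself is convex.

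Next I would unwind what the general Theorem~\ref{thm:main} says in the elliptic case. Since $x$ is elliptic, Lemma~\ref{lem:elliptic} gives $\Phi(x) = \emptyset$, hence $\Phi^+ \setminus \Phi(x) = \Phi^+$, so $U_{\Phi^+ \setminus \Phi(x)} = U$. Moreover $T^x$ is finite (ellipticity forces $\mathfrak{t}^x = 0$) and $T(x)$ is generated by $T^x$ together with the coroots $\a^\vee(t)$ for $\a \in \Phi(x) = \emptyset$, so $T(x)$ is a finite subgroup of $T$; consequently $L_x = U_{\Phi^+(x)} T(x) U_{\Phi^-(x)} = T(x)$ is finite, and in particular $\dx L_x = \dx T(x)$. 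Choosing the lift $\dx$ appropriately (or absorbing the finite factor, noting $\dx T(x) = T(x)\dx$ and $T(x)$ normalizes $U_{\Phi^+ \cap x\i(\Phi^-)}$), the isomorphism $\Xi_{\dx}$ of Theorem~\ref{thm:main} becomes exactly
\[
U \times \dx U_{\Phi^+ \cap x\i(\Phi^-)} \overset{\sim}{\longrightarrow} U\dx U, \qquad (y,z) \mapsto yzy\i,
\]
which is the asserted map. The only point requiring a line of care is that $U \dx L_x U = U\dx U$ when $L_x$ is finite; here $L_x = T(x) \subseteq T$ normalizes $U$ and $U^-$, and $\dx L_x U \subseteq \dx U T$ — but in fact the cleanest route is to note $T(x) \subseteq N_T$ so that $\dx T(x)$ is again a union of coset representatives in $N_T \cdot (\text{torus part})$, and $U (\dx t) U = U\dx U$ for $t \in T(x)$ after adjusting the lift, since changing $\dx$ by an element of $T$ only conjugates $\CS_{\dx}$ by a torus element as remarked after the definition of $\CS_{\dx}$.

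The main (really the only) obstacle is the first reduction: verifying carefully that "elliptic minimal length" implies "convex", or more precisely that it suffices to treat a convex representative. I expect the clean argument is: the statement of Theorem~\ref{thm:HL} for $x$ is equivalent to the same statement for any $x''$ with $x \approx x''$ (cyclic shifts between minimal length elements induce, via the standard conjugation-by-$\dot s$ argument, a commutative diagram identifying the two maps up to isomorphism — this is exactly the kind of reduction used in \cite{HL} and \cite{HN12}), and Proposition~\ref{prop:elliptic} furnishes such an $x'' = x' \in \CO_{\min}$ that is convex; then Theorem~\ref{thm:main} applied to $x'$, together with the elliptic simplifications $\Phi(x') = \emptyset$ and $L_{x'}$ finite from Lemma~\ref{lem:elliptic}, yields the claim. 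I would spell out the cyclic-shift compatibility in a short lemma (or cite that the map in question is, up to the isomorphism $U\dx U \cong U\dot{x}'' U$, intertwined by $u \mapsto \dot s u \dot s\i$), and then the rest is the bookkeeping in the previous paragraph.
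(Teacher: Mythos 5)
Your proposal follows essentially the same route as the paper's proof: reduce via Proposition \ref{prop:elliptic} and the cyclic-shift invariance of $\Xi_\dx$ from \cite[\S 3.5 (a), (b)]{HL} to a convex representative of minimal length, then invoke Lemma \ref{lem:elliptic} to get $\Phi(x)=\emptyset$ (so $U_{\Phi^+\setminus\Phi(x)}=U$) and conclude by Theorem \ref{thm:main}. The one wobble is your handling of the residual finite factor $L_x=T(x)=T^x$: ``adjusting the lift'' does not remove it; the clean fix (left implicit in the paper as well) is that $\dx T^x U_{\Phi^+\cap x\i(\Phi^-)}$ and $U\dx T^x U$ decompose into the pieces indexed by $t\in T^x$, with $U\dx t U$ pairwise disjoint by the uniqueness of the torus part in the Bruhat cell, so the isomorphism of Theorem \ref{thm:main} restricts to the $t=1$ components and gives exactly the asserted map $U\times \dx U_{\Phi^+\cap x\i(\Phi^-)}\overset{\sim}{\to} U\dx U$.
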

\begin{proof}
By Proposition \ref{prop:elliptic}, there exists a convex element $x'$ such that $x \approx  x'$. By \cite[\S3.5 (a), (b)]{HL}, $\Xi_\dx$ is an isomorphism if and only if so is $\Xi_{\dx'}$. Hence we can and do assume that $x$ is convex. Note that $\Phi(x) = \emptyset$ by Lemma \ref{lem:elliptic}. Then the statement follows from Theorem \ref{thm:main}.
\end{proof}

\begin{comment}
\begin{remark}
We have proved that Theorem \ref{thm:HL} holds for all commutative ring $A$ and all ring automorphism $\chi$ of $A$. This generalizes the original statement in \cite{HL}. However, this new proof is not independent of \cite{HL}, as we utilize \cite[\S3.5 (a), (b)]{HL}.
\end{remark}
    
\end{comment}

\begin{example}\label{rmk:non-convex}
In \cite[\S0.4]{HL}, He and Lusztig show that the map $\Xi_\dx$ for $x = (1, 6, 4, 5, 2, 3)$ in the case of $G = \GL_6$ is not injective. In fact, one can check that $n_x(\a_2 ) = 1$, $n_x( \a_3) = 2$, $n_x( \a_2+\a_3) = 3$ and hence $x$ is not quasi-convex.
\end{example}

\subsection{A counterexample}
In this subsection, we show that set of convex elements is strictly larger than the set of good position elements. Thus Theorem \ref{thm:main} provides many new cases of Steinberg cross-sections.

We first recall a length formula for good position elements.
\begin{lemma}[{\cite[Lemma 2.1]{HN12}, \cite[Corollary 3.11]{Duan}}]\label{lem:length}
Let $x\in W\rtimes\<\d\>$ be a good position element with respect to $(\Phi^+,\underline{\th} =(\th_1,\th_2\ldots,\th_r))$. Then
$$\ell(x) = \sum_{i=1}^{r}\frac{\th_i}{\pi} \bigl(h_{i-1}-h_i\bigr) ,$$
where $h_0 = \sharp \Phi^+$ and $h_i = \sharp\{\g\in\Phi^+ ; H_{\g}\supseteq \bigoplus_{j=1}^{i} V_{x}^{\th_{j}}  \}$ for $1 \le i \le r$.  
\end{lemma}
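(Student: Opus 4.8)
The plan is to prove the formula by induction on the length $r$ of the admissible sequence, using the recursive structure of Lemma \ref{lem:inductive} to peel off $\th_1$, and reducing the remaining term to a crossing count in the rotation plane attached to $\th_1$.

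\textbf{Reduction.} Recall the standard identity $\ell(y)=\#\{\g\in\Phi^+ ; y(\g)\in\Phi^-\}$ for $y\in W\rtimes\<\d\>$. If $r=0$ then $x=1$ and both sides vanish, so assume $r\ge1$ and that the statement holds for all strictly shorter admissible sequences, in every root system. Put $\Phi_1=\{\g\in\Phi ; V_x^{\th_1}\subseteq H_\g\}$; by Lemma \ref{lem:rotation}(1) this is an $x$-stable standard parabolic subsystem, and by Lemma \ref{lem:inductive} $\overline{C_0}$ contains a $\Phi$-regular point $e$ of $V_x^{\th_1}$ while $x$ is at good position with respect to $(\Phi_1^+,\underline{\th}_{\ge2})$. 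Since $x$ preserves $\Phi_1$,
\[\{\g\in\Phi^+ ; x(\g)\in\Phi^-\}=\{\g\in\Phi_1^+ ; x(\g)\in\Phi_1^-\}\ \sqcup\ \{\g\in\Phi^+\setminus\Phi_1 ; x(\g)\in\Phi^-\},\]
so $\ell(x)=\ell_{\Phi_1}(x)+N$, where $\ell_{\Phi_1}(x)$ is the length of $x$ in the Coxeter system of $(\Phi_1,\Phi_1^+)$ and $N$ is the size of the second set. Applying the induction hypothesis to $x$ acting on $\Phi_1$ with the sequence $\underline{\th}_{\ge2}$, and noting that the parabolic's $h$-numbers agree with the original $h_i$ for $i\ge1$ — because any $\g\in\Phi^+$ with $H_\g\supseteq\bigoplus_{j=1}^i V_x^{\th_j}$ automatically lies in $\Phi_1$, and for $\g\in\Phi_1$ the condition $H_\g\supseteq\bigoplus_{j=1}^i V_x^{\th_j}$ is equivalent to $H_\g\supseteq\bigoplus_{j=2}^i V_x^{\th_j}$ — one obtains $\ell_{\Phi_1}(x)=\sum_{i=2}^r\frac{\th_i}{\pi}(h_{i-1}-h_i)$. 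Since $h_0-h_1=\#(\Phi^+\setminus\Phi_1)$, it remains to prove the single identity $N=\frac{\th_1}{\pi}\bigl(h_0-h_1\bigr)$.

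\textbf{Geometric reformulation of $N$.} For $\g\in\Phi^+\setminus\Phi_1$ one has $\g,x(\g)\notin\Phi_1=\Phi_e$, so regularity of $e$ gives $\<e,\g\>>0$ and $\<e,x(\g)\>\ne0$; hence $x(\g)\in\Phi^-$ if and only if $\<x\i(e),\g\>=\<e,x(\g)\><0$, i.e. if and only if $e$ and $x\i(e)$ lie on opposite sides of $H_\g$. If $\th_1=\pi$, then $x$ acts as $-1$ on $V_x^\pi\ni e$, so $x\i(e)=-e$ and this holds for \emph{every} $\g\in\Phi^+\setminus\Phi_1$, giving $N=\#(\Phi^+\setminus\Phi_1)=\frac{\pi}{\pi}(h_0-h_1)$. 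If $0<\th_1<\pi$, let $U=\mathrm{span}\{x^i(e) ; i\in\BZ\}$; as in the proof of Lemma \ref{lem:rotation}, $U$ is $2$-dimensional and $x$ acts on it as a rotation by $\th_1$. For $\g\in\Phi^+\setminus\Phi_1$ regularity forces $e\notin H_\g$ (and likewise $x\i(e)\notin H_\g$, applying it to $x(\g)\notin\Phi_1$), hence $U\not\subseteq H_\g$ and $L_\g:=H_\g\cap U$ is a line through the origin in $U$; the criterion above says $L_\g$ separates $e$ and $x\i(e)$. Since the sector swept from $x\i(e)$ to $e$ has angular width $\th_1<\pi$ and the two rays of $L_\g$ are antipodal, this occurs precisely when exactly one ray of $L_\g$ lies in that open sector. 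Summing over $\g$, $N$ equals the number of rays of the lines $\{L_\g ; \g\in\Phi^+\setminus\Phi_1\}$, counted with multiplicity, lying in the open sector.

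\textbf{Symmetry argument.} Let $M$ be the resulting multiset of ray-directions, a multiset of $2\,\#(\Phi^+\setminus\Phi_1)$ points on $\BR/2\pi\BZ$ that is invariant under rotation by $\pi$ (antipodal pairs) and, crucially, invariant under rotation by $\th_1$: indeed $x$ permutes $\Phi\setminus\Phi_1$ with $L_{x(\g)}=x(L_\g)$, and for each line $L$ the set $\{\g\in\Phi\setminus\Phi_1 ; L_\g=L\}$ is stable under $\g\mapsto-\g$, so exactly half of it lies in $\Phi^+$ and the multiplicity of $L$ equals that of $x(L)$. As $x$ (hence $\th_1$) has finite order on $V$ — $\d$ permutes the finite set $\D$ — write $\th_1/\pi=p/q$ in lowest terms; then $M$ is invariant under rotation by $\pi/q$, so the $2q$ half-open arcs of length $\pi/q$ tiling the circle each carry $|M|/2q=\#(\Phi^+\setminus\Phi_1)/q$ points of $M$, and the open sector of width $\th_1=p\cdot(\pi/q)$, whose endpoints avoid $M$, breaks into $p$ consecutive such arcs, each again carrying $\#(\Phi^+\setminus\Phi_1)/q$ points by the $\pi/q$-invariance. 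Hence $N=p\cdot\#(\Phi^+\setminus\Phi_1)/q=\frac{\th_1}{\pi}(h_0-h_1)$, which completes the induction.

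\textbf{Main obstacle.} The inductive bookkeeping of the $h_i$ is routine; the real content lies in the last two paragraphs — translating $N$ into a hyperplane-crossing count in the $2$-plane $U$ and extracting the exact ratio $\th_1/\pi$ from the rotational symmetry of the configuration of root-traces. The two delicate points are (i) using regularity of $e$ to guarantee that $e$ and its $x$-orbit avoid every hyperplane $H_\g$ with $\g\notin\Phi_1$, and (ii) the arithmetic of the arc of width $\th_1$, which need not be commensurable with a fundamental arc until one exploits the antipodal symmetry to reduce the relevant period to $\pi/q$.
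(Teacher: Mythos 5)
Your argument is correct, but note that the paper itself does not prove this lemma at all: it is quoted from \cite[Lemma 2.1]{HN12} and \cite[Corollary 3.11]{Duan}, so there is no in-paper proof to compare against. What you have written is in effect a self-contained reconstruction of the He--Nie style argument, organized around the paper's own infrastructure: Lemma \ref{lem:inductive} to peel off $\th_1$, the identification $\Phi_1=\Phi_e$ and the two-dimensional rotation plane $U$ exactly as in the proof of Lemma \ref{lem:rotation}, and then the crossing count $N=\frac{\th_1}{\pi}(h_0-h_1)$ via the $\pi$- and $\th_1$-invariance of the multiset of root-trace rays, with the antipodal symmetry reducing the period to $\pi/q$. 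I checked the delicate points and they hold: $x$ acts as $-\mathrm{id}$ on $V_x^{\pi}$ (so the $\th_1=\pi$ case is as you say); regularity of $e$ together with $x(\Phi_1)=\Phi_1$ guarantees the whole $x$-orbit of $e$ avoids $H_\g$ for $\g\notin\Phi_1$, so the sector endpoints miss $M$; the multiplicity of a line among $\{L_\g;\g\in\Phi^+\setminus\Phi_1\}$ is half its multiplicity over $\Phi\setminus\Phi_1$, whence the $\th_1$-invariance of $M$ despite $x$ not preserving $\Phi^+$; and $\th_1\in\pi\BQ$ since $x$ has finite order on $V$. Two small points you gloss over but which do work out: (i) applying the induction hypothesis to $\Phi_1$ requires viewing the induced action of $x$ on $\Phi_1$ as an element of $W_{\Phi_1}\rtimes\<\sigma\>$ for the diagram automorphism $\sigma$ of $(\Phi_1,\Phi_1^+)$ obtained by correcting $x$ by the unique element of $W_{\Phi_1}$ taking $x(\Phi_1^+)$ back to $\Phi_1^+$; and (ii) the $h$-number matching is insensitive to whether the eigenspaces in the subsystem are taken in $V$ or in $\BR\Phi_1$, because $\BR\Phi_1$ and its orthogonal complement are $x$-stable and $H_\g\supseteq(\BR\Phi_1)^\perp$ for $\g\in\Phi_1$. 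With those remarks, your proof is complete and in fact supplies a proof of a statement the paper only cites.
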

We now explicitly construct a convex element which is not a cyclic shift of any good position element with respect to any admissible sequence.
\begin{example}\label{ex:234123}
In the case of type $A_4$. There are only two $\Phi$-admissible sequences for the conjugacy class the Coxeter element $s_1 s_2 s_3 s_4$, which are $\underline{\th} = (\frac{2\pi}{5},\frac{4\pi}{5})$ and $\underline{\th'} = (\frac{4\pi}{5},\frac{2\pi}{5})$. By Lemma \ref{lem:length}, we deduce that all good position elements with respect to $\underline{\th}$ have length $\frac{(2\pi/5)}{\pi}10 = 4$, and all good positions elements with respect to $\underline{\th'}$ have length $\frac{(4\pi/5)}{\pi}10 = 8$. However, we can check directly that $x = s_2s_3s_4s_1s_2s_3 \in \CO$ is convex. But $x$ has length $6$ and hence is not a cyclic shift of any good position element with respect to any admissible sequence.
\end{example}

\section{Coxeter elements}\label{sec:Coxeter}

%It turns out that convex elements are plenty. In type $A_3$, ${}^{2}A_3$, all elements in $W$ are convex. In type $A_4$, there are 18 non-convex elements. In type ${}^{2}A_4$, there are $24$ non-convex elements.

An element $c$ of $W$ is said to be a $\d$-Coxeter element, if $c$ is the product of some simple reflections, one from each $\d$-orbit of $\BS$. It is natural to ask if all $\d$-Coxeter elements are convex. We propose the following conjecture.
\begin{conjecture}\label{conj:cox}
Let $c$ be a $\d$-Coxeter element of $W$. Then $c\d \in W\rtimes\<\d\>$ is convex. 
\end{conjecture}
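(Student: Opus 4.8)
The plan is to show that $c\d$ and $(c\d)\i$ are both quasi-convex for $\Phi^+$. Since convexity is symmetric in $x$ and $x\i$, and $(c\d)\i$ is again a Coxeter element of the form $c'\d\i$, it is enough to prove that, for every automorphism $\d$, every $\d$-Coxeter element $c\d$ satisfies condition (2) of Definition \ref{def:convex}. Condition (1) is painless: a $\d$-Coxeter element $c\d$ is elliptic on $V$ --- it is ($W$-conjugate to) the regular element of order the twisted Coxeter number in Springer's sense, so its eigenvalues on $V$ are primitive roots of unity and $1$ does not occur --- hence $\Phi(c\d) = \Phi((c\d)\i) = \emptyset$ by Lemma \ref{lem:elliptic}, the standard parabolic subsystem for $J=\emptyset$. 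So everything reduces to the inequality
\[
n_{c\d}(\a+\b) \le n_{c\d}(\b), \qquad \a\in\Phi^+\cap (c\d)\i(\Phi^-),\ \ \b\in\Phi^+,\ \ \a+\b\in\Phi^+ .
\]

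Part of this is already in place. If $c\d$ lies at good position with respect to some $\Phi$-admissible sequence, Theorem \ref{thm:constr} gives convexity outright. A bit more is reachable via Lemma \ref{lem:rotation}: by Springer's theory $c\d$ has a $\Phi$-regular eigenvector $v$ for $e^{2\pi i/d}$ with $d=\ord(c\d)$, so the rotation plane $K:=V_{c\d}^{2\pi/d}=\BR\<\Re(v),\Im(v)\>$ lies in no root hyperplane; if $\overline{C_0}$ contains a $\Phi$-regular point of $K$, then Lemma \ref{lem:rotation} applies with $\th=2\pi/d$ and $\Psi=\emptyset$, and since every element is convex for the empty root system its part (4) yields convexity for $\Phi^+$ at once. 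This covers in particular the bipartite (``Steinberg'') $\d$-Coxeter element. The limitation is that whether $\overline{C_0}\cap K$ contains a $\Phi$-regular point of $K$ depends on the ambient chamber: for some $\d$-Coxeter elements --- e.g. the element of Example \ref{ex:234123} in type $A_4$, or the ``linear'' Coxeter element $s_1s_2\cdots s_n$ in type $A_n$ (cf. Example \ref{ex:A3}) --- no such point exists, and these are genuinely outside the reach of Theorem \ref{thm:constr}.

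For the remaining elements I would attack the displayed inequality directly. The structural input is the explicit orbit behaviour of $\<c\d\>$ on $\Phi$ (all orbits of size $d$): fixing a reduced word $c=s_{i_1}\cdots s_{i_n}$, one reads off $n_{c\d}(\g)$ from the cyclic ``$c$-sorting'' pattern of the roots $s_{i_1}\cdots s_{i_{k-1}}(\a_{i_k})$ together with their $c\d$-iterates, and the inequality for a triple $(\a,\b,\a+\b)$ localises to a check inside the rank-$2$ subsystem spanned by $\a$ and $\b$. I expect the uniform execution of this to be the main obstacle, and the reason the statement is only conjectured: a Coxeter word has length $n$, not $\ell(w_0)$, so it does not by itself produce a reflection ordering of $\Phi^+$, and one must keep careful track of how the sign of each root toggles around its $c\d$-orbit. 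A natural special case one can settle this way is type $A$: there $c\d$ is an explicit cycle and $n_{c\d}(e_a-e_b)$ admits a closed form --- for the linear Coxeter element, $n_c(e_a-e_b)=n+2-b$ and $n_{c\i}(e_a-e_b)=a$ for $a<b$ --- from which condition (2) is transparent.
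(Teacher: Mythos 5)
Keep in mind that the statement you are proving is stated in the paper as a \emph{conjecture}: the paper itself offers no proof of it, only the special case of Proposition \ref{prop:cox-convex} (assuming $h$ even and $(c\d)^{h/2}=w_0\d^{h/2}$), which is settled by turning the iterated Coxeter word into a reduced expression of $w_0$ and invoking the bijection with reflection orderings, so that the level sets of $n_{c\d}$ and $n_{(c\d)^{-1}}$ become consecutive blocks of a reflection ordering and condition (2) of Definition \ref{def:convex} is immediate. Measured against that, your proposal is also not a proof, and you say so yourself: the reduction to condition (2) (condition (1) being $\Phi(c\d)=\emptyset$ by ellipticity and Lemma \ref{lem:elliptic}, as in the paper) is correct, and your two partial results are sound --- (i) when $\overline{C_0}$ contains a $\Phi$-regular point of the Coxeter plane, Lemma \ref{lem:rotation} with $\Psi=\emptyset$ does give convexity, covering the bipartite case; (ii) your closed formulas $n_c(e_a-e_b)=n+2-b$, $n_{c^{-1}}(e_a-e_b)=a$ for the linear Coxeter element in type $A_n$ are correct and do verify condition (2) there, a case which the paper's Proposition \ref{prop:cox-convex} does \emph{not} cover (no Coxeter element of type $A_n$, $n\ge 2$, $\d=\id$, satisfies $c^{h/2}=w_0$). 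But the core of the conjecture --- the inequality $n_{c\d}(\a+\b)\le\max\{n_{c\d}(\a),n_{c\d}(\b)\}$ for an arbitrary $\d$-Coxeter element, uniformly in all types and all choices of Coxeter word --- is exactly the part your ``rank-two localisation of the $c$-sorting pattern'' plan leaves unexecuted, so the gap in your argument coincides with what is genuinely open.

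Two smaller points. First, the element $s_2s_3s_4s_1s_2s_3$ of Example \ref{ex:234123} lies in the Coxeter conjugacy class but is not itself a $\d$-Coxeter element (it is not a product of distinct simple reflections), so it is not a legitimate example of a Coxeter element escaping Lemma \ref{lem:rotation}; the linear element of Example \ref{ex:A3} is the right illustration. Second, it is worth noting how differently the two partial results are obtained and what each buys: the paper's reflection-ordering argument is insensitive to the position of the Coxeter plane relative to $\overline{C_0}$ and so covers, e.g., all Coxeter elements when $w_0=-1$ and all twisted cases, but needs the hypothesis $(c\d)^{h/2}=w_0\d^{h/2}$; your geometric route via Lemma \ref{lem:rotation} needs no hypothesis on $w_0$ but only applies when the fixed dominant chamber happens to meet the Coxeter plane, and your type-$A$ computation is a genuinely new data point beyond Proposition \ref{prop:cox-convex}. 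A natural way to push your plan further would be to look for a reflection-ordering-like structure on the $\<c\d\>$-orbits in $\Phi$ that does not pass through a reduced expression of $w_0$, since, as you observe, the Coxeter word alone is too short to produce one.
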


Let $h$ be order of $c\d$ for some/any $\d$-Coxeter element. Let $w_0 \in W$ denote the longest element. 
\begin{proposition}\label{prop:cox-convex}
Conjecture \ref{conj:cox} is true if $h$ is even and $(c\d)^{\frac{h}{2}} = w_0\d^{\frac{h}{2}}\in W\rtimes\<\d\>$. 
\end{proposition}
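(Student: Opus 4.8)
The plan is to reduce the proposition to a single geometric fact — that the Coxeter plane of $c\d$ meets the open dominant chamber $C_0$ — and then to extract that fact from the relation $(c\d)^{h/2}=w_0\d^{h/2}$. Write $x=c\d$ and $m=h/2$. Since $\d$-Coxeter elements are elliptic (all eigenvalues of $x$ on $V$ are nontrivial roots of unity), Lemma~\ref{lem:elliptic} gives $\Phi(x)=\Phi(x\i)=\emptyset$, so condition~(1) in the definition of quasi-convexity holds for both $x$ and $x\i$ with $J=\emptyset$; only condition~(2) for $x$ and for $x\i$ remains to be checked.

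Next I would invoke the classical structure of (twisted) Coxeter elements: $e^{2\pi i/h}$ is an eigenvalue of $x$ on $V$, so the Coxeter plane $P:=V_x^{2\pi/h}$ is two-dimensional, $x$ acts on $P$ as rotation by $2\pi/h$, and no root hyperplane contains $P$. Hence $\Psi:=\{\g\in\Phi;\,P\subseteq H_\g\}=\emptyset$, the one-term sequence $\underline\th=(2\pi/h)$ is $\Phi$-admissible for $x$ (it contains $\Phi$-regular points of $(V^x)^\perp=V$), and being at good position with respect to $(\Phi^+,\underline\th)$ means exactly that $\overline{C_0}$ contains a regular point lying in $P$, i.e.\ $C_0\cap P\neq\emptyset$. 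So, granting $C_0\cap P\neq\emptyset$, Theorem~\ref{thm:constr} immediately yields that $x=c\d$ is convex, which is the assertion. (Equivalently, one applies Lemma~\ref{lem:rotation}(2),(3) to $x$ and to $x\i$, using $V_x^{2\pi/h}=V_{x\i}^{2\pi/h}$ and $\Psi=\emptyset$.)

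It therefore remains to prove $C_0\cap P\neq\emptyset$, and this is where the hypothesis enters; I expect this step to be the main obstacle. Put $\io=w_0\d^m$, so that $\io=x^m$ by assumption and $\io$ is an orthogonal involution of $V$ with $\io(\Phi^+)=\Phi^-$ (since $\d^m$ permutes $\Phi^+$ while $w_0$ reverses it). As $x$ rotates $P$ by $2\pi/h$ and $m\cdot 2\pi/h=\pi$, the map $\io$ acts on $P$ as $-\mathrm{id}$, so $P$ lies in the $(-1)$-eigenspace $V^-$ of $\io$; and for $v$ fixed by $\io$ one has $\sum_{\g\in\Phi^+}\langle v,\g\rangle=\sum_{\g\in\Phi^+}\langle\io v,\io\g\rangle=-\sum_{\g\in\Phi^+}\langle v,\g\rangle$, so $\langle v,2\rho\rangle=0$, whence the $(+1)$-eigenspace of $\io$ meets $\overline{C_0}$ only in $\{0\}$ — i.e.\ $\overline{C_0}$ is aimed into $V^-\supseteq P$. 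To finish I would pin $x$ down more precisely: the relation $(c\d)^{h/2}=w_0\d^{h/2}$ is the twisted analogue of Steinberg's identity for the bipartite Coxeter element, and I expect it forces $c=c_\bullet c_\circ$ for a $\d$-stable $2$-colouring $\D=\D_\bullet\sqcup\D_\circ$; for the bipartite element the Coxeter plane $P$ is classically known (Kostant) to meet $\overline{C_0}$ in a two-dimensional cone built from the Perron--Frobenius eigenvector, and since no root vanishes on $P$ the relative interior of that cone equals $C_0\cap P$, which is therefore nonempty. Alternatively, after fixing such a colouring one may write the $\zeta_h$-eigenvector of $c\d$ explicitly and check directly that the real plane it spans meets $C_0$. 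Thus the crux is the identification of $c\d$ as the bipartite $(\d$-$)$Coxeter element together with the verification that its Coxeter plane faces the dominant chamber; everything else is bookkeeping via Theorem~\ref{thm:constr}.
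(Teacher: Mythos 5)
Your reduction itself is sound: since $c\d$ is elliptic, $\Phi(c\d)=\emptyset$ by Lemma~\ref{lem:elliptic}, the one-term sequence $(2\pi/h)$ is $\Phi$-admissible, and good position with respect to $(\Phi^+,(2\pi/h))$ is indeed equivalent to $C_0\cap V_{c\d}^{2\pi/h}\neq\emptyset$, whence Theorem~\ref{thm:constr} would give convexity. The gap is that the geometric fact you reduce to is \emph{false} under the hypothesis of the proposition, so no argument along these lines can prove the full statement. Concretely, take type $B_3$ with $\d=\id$, simple roots $\a_1=e_1-e_2$, $\a_2=e_2-e_3$, $\a_3=e_3$; here $w_0=-1$ and $h=6$, so \emph{every} Coxeter element satisfies $c^{h/2}=w_0$. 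The element $c=s_1s_2s_3$ is not of the form $c_\bullet c_\circ$ for the bipartition $\{s_1,s_3\}\sqcup\{s_2\}$, so the hypothesis does not force bipartiteness as you expect. Moreover $c$ sends $e_1\mapsto e_2\mapsto e_3\mapsto -e_1$, and its Coxeter plane $P=V_c^{\pi/3}$ is spanned by $(-\tfrac12,\tfrac12,1)$ and $(\tfrac{\sqrt3}{2},\tfrac{\sqrt3}{2},0)$; a point $a(-\tfrac12,\tfrac12,1)+b(\tfrac{\sqrt3}{2},\tfrac{\sqrt3}{2},0)$ lies in $\overline{C_0}$ only if $-a\ge 0$ and $a\ge 0$, i.e.\ $a=0$, in which case it lies on $H_{\a_1}$. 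So $\overline{C_0}$ contains no $\Phi$-regular point of $P$ and $C_0\cap P=\emptyset$: this $c$ satisfies the hypothesis and (per the proposition) is convex, yet it is not at good position and its Coxeter plane avoids the dominant chamber. Your route would therefore prove the proposition only for bipartite-positioned Coxeter elements, which is exactly the already-known good-position case.

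This also explains why the paper argues differently: good position is strictly stronger than convexity (cf.\ Example~\ref{ex:234123}), and the point of Proposition~\ref{prop:cox-convex} is to reach Coxeter elements that need not be at good position. The paper's proof never passes through the Coxeter plane. It uses the hypothesis to write $w_0=s_1\cdots s_n\,\d(s_1)\cdots\d(s_n)\cdots\d^{h/2-1}(s_1)\cdots\d^{h/2-1}(s_n)$ as a reduced word, applies Dyer's bijection between reduced words of $w_0$ and reflection orderings of $\Phi^+$ to obtain an ordering organized into $h/2$ blocks $(c\d)^{j}\b_n,\dots,(c\d)^{j}\b_1$, reads off $n_{c\d}$ and $n_{(c\d)^{-1}}$ as block indices, and then the betweenness property of reflection orderings ($\g+\g'$ lies between $\g$ and $\g'$) gives condition~(2) of Definition~\ref{def:convex} for both $c\d$ and $(c\d)^{-1}$, while ellipticity handles condition~(1). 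The parts of your sketch that are correct (ellipticity, the implication ``$C_0\cap P\neq\emptyset\Rightarrow$ convex'') are exactly the easy bookkeeping; the crux you flagged is where the approach breaks down.
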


\begin{remark}Suppose the Dynkin diagram is connected. We illustrate some cases where the condition in Proposition \ref{prop:cox-convex} holds.

\begin{enumerate}
    \item If $\d = \id$ and $W$ is not of type $A_n$ ($n$ even), then there exists some Coxeter element $c$ such that $c^{h/2} = w_0$ (cf. \cite[Lemma 3.2 (i)]{Lusz76}).
    \item If $\d = \id$ and $w_0 = -1$ (this is true except in the case of type $A_n$, $D_n$ ($n$ odd), and $E_6$), then the condition $c^{h/2} = w_0$ holds for all Coxeter element $c$ (cf. \cite[Corollary 3.19]{Hum90}).
    \item If $\d \ne \id$, then the condition $(c\d)^{\frac{h}{2}} = w_0\d^{\frac{h}{2}}$ holds for all $\d$-Coxeter element $c$ (cf. \cite[Lemma 3.2 (ii)]{Lusz76}).
\end{enumerate}

\end{remark}

In order to prove Proposition \ref{prop:cox-convex}, we recall the notion of reflection ordering (see for example \cite[\S2.1]{BFP}). A reflection ordering on $\Phi^+$ is a total order $<$ such that for any $\a,\b\in\Phi^+$ with $\a + \b\in \Phi^+$, we have either $\a< \a + \b < \b$ or $\b < \a + \b < \a$. 

Let $N = \ell(w_0)=\sharp \Phi^+$. By \cite[Proposition 2.13]{Dyer93}, reflection orderings of $\Phi^+$ are in bijective correspondence with reduced expression of $w_0$, namely, $\b_N<\b_{N-1}<\cdots<\b_1$ is a reflection ordering if and only if there exists a reduced expression $w_0 = s_1s_2\cdots s_N$ such that $\b_i = s_N s_{N-1}\cdots s_{i+1}\a_i$ for $i=1,2,\ldots, N$. Here, $\a_i$ is the simple root corresponding to $s_i$. 

%Let $N = \ell(w_0)=\sharp \Phi^+$. By \cite[Proposition 2.13]{Dyer93}, reflection orderings of $\Phi^+$ are in bijective correspondence with reduced expression of $w_0$, namely, $\b_1<\b_{2}<\cdots<\b_{N}$ is a reflection ordering if and only if there exists a reduced expression $w_0 = s_1s_2\cdots s_N$ such that $\b_i = s_1 s_{2}\cdots s_{i-1}\a_i$ for $i=1,2,\ldots, N$. Here, $\a_i$ is the simple root corresponding to $s_i$. 

\begin{proof}[Proof of Proposition \ref{prop:cox-convex}]
Let $c = s_1s_2\cdots s_n$ be a reduced expression of $c$, where $n=\sharp\D$. Let $\a_i$ be the simple root corresponding to $s_i$ for each $i$. By assumption, $$w_0 = s_1s_2\cdots s_n \d(s_1)\d(s_2)\cdots \d(s_n)\cdots \d^{\frac{h}{2}-1}(s_1)\d^{\frac{h}{2}-1}(s_2)\cdots \d^{\frac{h}{2}-1}(s_n)$$ is a reduced expression. Set $\b_i = s_n s_{n-1}\cdots s_{i+1}\a_i$ for $i=1,2,\ldots, n$. Applying the bijection mentioned above, this reduced expression of $w_0$ gives rise to a reflection ordering
\begin{align*}\tag{4.1}\label{4.1}
    &(c\d)^{\frac{h}{2}-1}\b_n < (c\d)^{\frac{h}{2}-1}\b_{n-1}< \cdots <(c\d)^{\frac{h}{2}-1}\b_1\\
    <&(c\d)^{\frac{h}{2}-2}\b_n < (c\d)^{\frac{h}{2}-2}\b_{n-1}< \cdots < (c\d)^{\frac{h}{2}-2}\b_1\\
    <&\cdots\\
    <&(c\d)\b_n < (c\d)\b_{n-1}<\cdots < (c\d)\b_1\\
    <&\b_n <  \b_{N-1} <\cdots< \b_1 .
\end{align*}
Hence, for $i =1,2,\ldots,h/2$, we have 
\begin{align*}\tag{4.2}\label{4.2}
\{\g \in \Phi^+;  n_{c\d}(\g) = i\} = \{ (c\d)^{\frac{h}{2}-i}\b_n, (c\d)^{\frac{h}{2}-i}\b_{n-1} ,\ldots,  (c\d)^{\frac{h}{2}-i}\b_1  \};\\
\{\g \in \Phi^+;  n_{(c\d)\i}(\g) = i\} = \{ (c\d)^{i-1}\b_n, (c\d)^{i-1}\b_{n-1} ,\ldots,  (c\d)^{i-1}\b_1  \}.
\end{align*}

Let $\g,\g'\in\Phi^+$ such that $\g+\g'\in \Phi^+$. By the definition of reflection ordering, the root $\g+\g'$ lies between $\g$ and $\g'$ in the sequence (\ref{4.1}). Then $n_{(c\d)^{\pm1}}(\g+\g')\le  \max\{n_{(c\d)^{\pm1}}(\g),n_{(c\d)^{\pm1}}(\g')\}$ by (\ref{4.2}). By Lemma \ref{lem:elliptic}, $\Phi(c\d) = \emptyset$. This proves that $c\d$ is convex.
\end{proof}

%\section{General Version}
%In this section, we state the general version of Theorem \ref{thm:main}.

%Let $\mathcal{R}=(X^*,X_*,\Phi=\Phi^+\sqcup \Phi^-,\D)$ be a based root datum. Let $W$ be its Weyl group. Let $\d$ be an automorphism of $\CR$. Let $G$ be the Chevalley (connected, reductive, split) group scheme associated with the root datum $\CR$. Let $U$ be the unipotent subgroup corresponding to $\Phi^+$. 

%Let $A$ be a commutative ring and $\chi$ be a ring automorphism of $A$. Note that $\chi$ and $\d$ acts on $G(A)$ naturally. Set $\pi = \d\circ\chi = \chi\circ \d : G(A)\rightarrow G(A)$. For any $w\in W$, fix a lift $\dw\in G(A)$. For any $x\in W\rtimes\<\d\>$, define $\dx\in G(A)\rtimes\<\pi\>$ as in \S1.% For any convex element $x$, define $L_{\dot{x}}(A)$, $\CS_{\dx}(A)$ as in \S1.
%\begin{theorem}\label{thm:general}
%Let $x\in W\rtimes\<\d\>$ be a convex element. Then the map $(y, z) \mapsto y z  y\i$ gives a bijection 

%\begin{align*}\Xi_{x,A}: U_{\Phi^+ \setminus \Phi(x)}(A) \times\dx L_{x}(A) U_{\Phi^+ \cap x^{-1}(\Phi^-)}(A)  \congU_{\Phi^+\setminus\Phi(x)}(A) \dot{x} L_x(A)U_{\Phi^+\setminus\Phi(x)}(A).\end{align*}\end{theorem}

%\begin{proof}We check that the proof of Theorem \ref{thm:main} does not depend on the ring $A$ and the ring automorphism $\chi$ of $A$. Hence Theorem \ref{thm:general} can be proved in the same way.\end{proof}

\printbibliography

@article {Lusz76,
    AUTHOR = {Lusztig, G.},
     TITLE = {Coxeter orbits and eigenspaces of {F}robenius},
   JOURNAL = {Invent. Math.},
  FJOURNAL = {Inventiones Mathematicae},
    VOLUME = {38},
      YEAR = {1976},
    NUMBER = {2},
     PAGES = {101--159},
      ISSN = {0020-9910,1432-1297},
   MRCLASS = {20G05},
  MRNUMBER = {453885},
MRREVIEWER = {David\ Kazhdan},
       DOI = {10.1007/BF01408569},
       URL = {https://doi.org/10.1007/BF01408569},
}

@article {LS79,
    AUTHOR = {Lusztig, G. and Spaltenstein, N.},
     TITLE = {Induced unipotent classes},
   JOURNAL = {J. London Math. Soc. (2)},
  FJOURNAL = {Journal of the London Mathematical Society. Second Series},
    VOLUME = {19},
      YEAR = {1979},
    NUMBER = {1},
     PAGES = {41--52},
      ISSN = {0024-6107,1469-7750},
   MRCLASS = {20G05},
  MRNUMBER = {527733},
       DOI = {10.1112/jlms/s2-19.1.41},
       URL = {https://doi.org/10.1112/jlms/s2-19.1.41},
}

@article {St,
    AUTHOR = {Steinberg, Robert},
     TITLE = {Regular elements of semisimple algebraic groups},
   JOURNAL = {Inst. Hautes \'Etudes Sci. Publ. Math.},
  FJOURNAL = {Institut des Hautes \'Etudes Scientifiques. Publications
              Math\'ematiques},
    NUMBER = {25},
      YEAR = {1965},
     PAGES = {49--80},
      ISSN = {0073-8301,1618-1913},
   MRCLASS = {14.50 (20.75)},
  MRNUMBER = {180554},
MRREVIEWER = {J.\ L.\ Koszul},
       URL = {http://www.numdam.org/item?id=PMIHES_1965__25__49_0},
}

@article {He14,
    AUTHOR = {He, Xuhua},
     TITLE = {Geometric and homological properties of affine
              {D}eligne-{L}usztig varieties},
   JOURNAL = {Ann. of Math. (2)},
  FJOURNAL = {Annals of Mathematics. Second Series},
    VOLUME = {179},
      YEAR = {2014},
    NUMBER = {1},
     PAGES = {367--404},
      ISSN = {0003-486X,1939-8980},
   MRCLASS = {14L35 (14M15 20G25)},
  MRNUMBER = {3126571},
MRREVIEWER = {Alexander\ Boris\ Ivanov},
       DOI = {10.4007/annals.2014.179.1.6},
       URL = {https://doi.org/10.4007/annals.2014.179.1.6},
}

@article {HL,
    AUTHOR = {He, Xuhua and Lusztig, George},
     TITLE = {A generalization of {S}teinberg's cross section},
   JOURNAL = {J. Amer. Math. Soc.},
  FJOURNAL = {Journal of the American Mathematical Society},
    VOLUME = {25},
      YEAR = {2012},
    NUMBER = {3},
     PAGES = {739--757},
      ISSN = {0894-0347,1088-6834},
   MRCLASS = {20G15},
  MRNUMBER = {2904572},
MRREVIEWER = {Toshiyuki\ Tanisaki},
       DOI = {10.1090/S0894-0347-2012-00728-0},
       URL = {https://doi.org/10.1090/S0894-0347-2012-00728-0},
}

@book{Hum90,
  title={Reflection Groups and Coxeter Groups},
  author={Humphreys, J.E.},
  isbn={9780521436137},
  lccn={93109267},
  series={Cambridge Studies in Advanced Mathematics},
  url={https://books.google.com.hk/books?id=ODfjmOeNLMUC},
  year={1990},
  publisher={Cambridge University Press}
}

@article {Lus11-A,
    AUTHOR = {G. Lusztig},
     TITLE = {From conjugacy classes in the Weyl group to unipotent classes I},
   JOURNAL = {Rep. Theory},
  FJOURNAL = {Representation Theory},
    VOLUME = {16},
      YEAR = {2011},
    NUMBER = {},
     PAGES = {494--530},
      ISSN = {},
   MRCLASS = {},
  MRNUMBER = {},
       DOI = {},
       URL = {},
}

@article {Lus11-B,
    AUTHOR = {G. Lusztig},
     TITLE = {On certain varieties attached to a Weyl group element},
   JOURNAL = {Bull. Inst. Math. Acad. Sinica (N.S.)},
  FJOURNAL = {},
    VOLUME = {6},
      YEAR = {2011},
    NUMBER = {},
     PAGES = {377--414},
      ISSN = {},
   MRCLASS = {},
  MRNUMBER = {},
       DOI = {},
       URL = {},
}

@article {I,
    AUTHOR = {A. B. Ivanov},
     TITLE = {On a decomposition of $p$-adic Coxeter orbits},
   JOURNAL = {\'{E}pijournal G\'{e}om. Alg\'{e}brique},
  FJOURNAL = {},
    VOLUME = {7},
      YEAR = {2023},
    NUMBER = {},
     PAGES = {41pp},
      ISSN = {},
   MRCLASS = {},
  MRNUMBER = {},
       DOI = {},
       URL = {},
}

@article {S,
    AUTHOR = {A. Sevostyanov},
     TITLE = {Algebraic group analogues of the Slodowy slices and deformations of Poisson $W$-algebras},
   JOURNAL = {Int. Math. Res. Notices},
  FJOURNAL = {},
    VOLUME = {8},
      YEAR = {2011},
    NUMBER = {},
     PAGES = {1880--1925},
      ISSN = {},
   MRCLASS = {},
  MRNUMBER = {},
       DOI = {},
       URL = {},
}

@article {Chan20,
    AUTHOR = {C. Chan},
     TITLE = {The cohomology of semi-infinite Deligne-Lusztig varieties},
   JOURNAL = {J. Reine Angew. Math.},
  FJOURNAL = {},
    VOLUME = {768},
      YEAR = {2020},
    NUMBER = {},
     PAGES = {93--147},
      ISSN = {},
   MRCLASS = {},
  MRNUMBER = {},
       DOI = {},
       URL = {},
}

@article {CI,
    AUTHOR = {C. Chan and A. B. Ivanov},
     TITLE = {On loop Deligne-Lusztig varieties of Coxeter-type for inner forms of $\GL_n$},
   JOURNAL = {Camb. J. Math.},
  FJOURNAL = {},
    VOLUME = {11},
      YEAR = {2023},
    NUMBER = {},
     PAGES = {441--505},
      ISSN = {},
   MRCLASS = {},
  MRNUMBER = {},
       DOI = {},
       URL = {},
}

@article {BFP,
    AUTHOR = {Brenti, Francesco and Fomin, Sergey and Postnikov, Alexander},
     TITLE = {Mixed {B}ruhat operators and {Y}ang-{B}axter equations for
              {W}eyl groups},
   JOURNAL = {Internat. Math. Res. Notices},
  FJOURNAL = {International Mathematics Research Notices},
      YEAR = {1999},
    NUMBER = {8},
     PAGES = {419--441},
      ISSN = {1073-7928,1687-0247},
   MRCLASS = {20F55 (05E15 17B37)},
  MRNUMBER = {1687323},
MRREVIEWER = {Godofredo\ Iommi Amun\'ategui},
       DOI = {10.1155/S1073792899000215},
       URL = {https://doi.org/10.1155/S1073792899000215},
}

@article {Dyer93,
    AUTHOR = {Dyer, M. J.},
     TITLE = {Hecke algebras and shellings of {B}ruhat intervals},
   JOURNAL = {Compositio Math.},
  FJOURNAL = {Compositio Mathematica},
    VOLUME = {89},
      YEAR = {1993},
    NUMBER = {1},
     PAGES = {91--115},
      ISSN = {0010-437X,1570-5846},
   MRCLASS = {20F55 (06A08)},
  MRNUMBER = {1248893},
MRREVIEWER = {Robert\ B\'edard},
       URL = {http://www.numdam.org/item?id=CM_1993__89_1_91_0},
}

@article {GM97,
    AUTHOR = {Geck, Meinolf and Michel, Jean},
     TITLE = {``Good'' elements of finite Coxeter groups and
              representations of Iwahori-Hecke algebras},
   JOURNAL = {Proc. London Math. Soc. (3)},
  FJOURNAL = {Proceedings of the London Mathematical Society. Third Series},
    VOLUME = {74},
      YEAR = {1997},
    NUMBER = {2},
     PAGES = {275--305},
      ISSN = {0024-6115,1460-244X},
   MRCLASS = {20F55 (20C20)},
  MRNUMBER = {1425324},
MRREVIEWER = {Stephen\ P.\ Humphries},
       DOI = {10.1112/S0024611597000105},
       URL = {https://doi.org/10.1112/S0024611597000105},
}

@article {HN12,
    AUTHOR = {He, Xuhua and Nie, Sian},
     TITLE = {Minimal length elements of finite {C}oxeter groups},
   JOURNAL = {Duke Math. J.},
  FJOURNAL = {Duke Mathematical Journal},
    VOLUME = {161},
      YEAR = {2012},
    NUMBER = {15},
     PAGES = {2945--2967},
      ISSN = {0012-7094,1547-7398},
   MRCLASS = {20F55 (20E45)},
  MRNUMBER = {2999317},
MRREVIEWER = {Christian\ Stump},
       DOI = {10.1215/00127094-1902382},
       URL = {https://doi.org/10.1215/00127094-1902382},
}

@article{Nie2023,
	author = {S. Nie},
	title = {{Steinberg's cross-section of Newton strata}},
	journal = {arXiv},
	year = {2023},
	month = 7,
	eprint = {2307.12521},
	doi = {10.48550/arXiv.2307.12521}
}

@misc{M,
      title={From braids to transverse slices in reductive groups}, 
      author={W. Malten},
      year={2021},
      eprint={2111.01313},
      archivePrefix={arXiv},
      primaryClass={math.RT}
}

@misc{Duan,
      title={Good position braid representatives and transversal slices of unipotent orbits}, 
      author={Chengze Duan},
      year={2023},
      eprint={2310.12750},
      archivePrefix={arXiv},
      primaryClass={math.RT}
}

@misc{Duan2,
      title={On certain varieties attached to braid representatives of a Weyl group element}, 
      author={Chengze Duan},
      year={2023},
      eprint={2312.04798},
      archivePrefix={arXiv},
      primaryClass={math.RT}
}

@misc{IN24,
      title={The cohomology of $p$-adic Deligne-Lusztig schemes of Coxeter type}, 
      author={Alexander B. Ivanov and Sian Nie},
      year={2024},
      eprint={2402.09017},
      archivePrefix={arXiv},
      primaryClass={math.RT}
}

@misc{HJ,
      title={Stokes phenomenon of Kloosterman and Airy connections}, 
      author={Andreas Hohl and Konstantin Jakob},
      year={2024},
      eprint={2404.09582 },
      archivePrefix={arXiv},
      primaryClass={math.AG}
}

@misc{IN,
      title={On higher Deligne-Lusztig varieties}, 
      author={Alexander B. Ivanov and Sian Nie},
      year={in preparation}
}

\end{document}